\documentclass[11pt,a4paper]{article}
\usepackage{amsmath,amsthm,amssymb}

\newtheorem{theorem}{Theorem}
\newtheorem{proposition}[theorem]{Proposition}
\newtheorem{lemma}[theorem]{Lemma}

\newtheorem{Conjecture}[theorem]{Conjecture}

\title{{\bf Ideal class groups of certain real quadratic number fields 
and factorizations of values of certain quadratic polynomials}}
\author{
St\'ephane R. LOUBOUTIN\\
Aix-Marseille Universit\'e, CNRS, Centrale Marseille, I2M,\\ 
Marseille, France\\
stephane.louboutin@univ-amu.fr}
\date{\today}
\begin{document}
\bibliographystyle{alpha}
\maketitle
\footnotetext{
1991 Mathematics Subject Classification. 
Primary 11R11, 11R27, 11R29.

Key words and phrases. 
Real quadratic field. 
Class group. 
Class number. 
Quadratic polynomial.}

\begin{abstract}
We determine the odd primes $p$ for which the number of distinct prime divisors of $d-x^2$ is at most $2$ 
for every positive odd integers $x\leq\sqrt{p}$. 
We also determine the odd primes $p$ for which the number of distinct prime divisors of $2p-x^2$ is at most $2$ 
for every positive even integers $x\leq\sqrt{2p}$. 
To this end, we show that these properties of $p$ and $2p$ impose restrictions 
on the class numbers and fundamental units of the real quadratic fields ${\mathbb Q}(\sqrt{p})$ 
and ${\mathbb Q}(\sqrt{2p})$.
\end{abstract}

\section{Introduction}
We keep the notation introduced in \cite{LouCMJ76}.
Let $\omega(n)$ denote the number of distinct prime divisors of an integer $n\geq 2$. 
For $d\geq 2$ we set
$$M_{odd}'(d)
:=\max\{\omega(d-x^2);\text{ $1\leq x\leq\sqrt d$ and $x$ odd}\}$$ 
and 
$$M_{even}'(d)
:=\max\{\omega(d-x^2);\text{ $2\leq x\leq\sqrt d$ and $x$ even}\}.$$ 
In \cite{GicaarXiv}, A. Gica determines all odd primes $p$ for which $M_{odd}'(p)\leq 2$, 
with one possible exception when $p\equiv 3\pmod 8$. 
In \cite[Theorem 1.9]{LouCMJ76} we proved that if $d\equiv 2\pmod 4$ is a positive square-free integer 
such that $M_{even}'(d)\leq 2$, 
then the class number of the real quadratic number field ${\mathbb K}={\mathbb Q}(\sqrt{d})$ divides $16$.
Unlike the results established in Propositions 
\ref{casedp=5mod8},
\ref{casedp=7mod8},
\ref{casedp=3mod8},
\ref{cased2p=1mod4},
\ref{cased2p=7mod8},
and \ref{cased2p=3mod8}, 
we were unable to prove that $M_{even}'(d)\leq 2$ implies that the fundamental unit of ${\mathbb K}$ 
is small.
It follows that we could not apply the Brauer-Siegel theorem to deduce that there are only finitely many such integers $d$. 
However, numerical computations readily suggested the following conjecture:

\begin{Conjecture}\label{conjecture}
Let $d\equiv 2\pmod 4$ be a positive square-free integer. 
Then $M_{even}'(d)\leq 2$ if and only if 
$d\in\{2,$ $6,$ $10,$ $14,$ $22,$ $26$, $30,$ $38,$ $42$, $62,$ $110,$ $122,$ $182,$ $278$, $362$, $398\}$ 
($16$ values). 
\end{Conjecture}

Our first aim is to present Gica's determination of the odd primes $p$ for which $M_{odd}'(p)\leq 2$ 
clearly and concisely, 
see Section \ref{Sectiond=p}. 
For example, Section \ref{p=1mod8} is a clearer and shorter presentation of Gica's proof 
for the case where $p\equiv 1\pmod 8$. 
We emphasize, however, that all the ideas in Section \ref{p=1mod8} are due to Gica.
Our second aim, in Section \ref{Sectiond=p}, is to solve Conjecture \ref{conjecture} 
for the case where $d=2p$ is twice a prime, 
with one possible exception of the form $2p=9m^2\pm 8m+2$, in the case where $p\equiv 3\pmod 8$. 
The possible exception in Propositions \ref{casedp=3mod8} and \ref{cased2p=3mod8}
comes from the use of the Siegel-Tatuzawa theorem,.
This potential exception could likely be ruled out using the methods developed in 
\cite{BiroActa106}, \cite{BiroActa107}, \cite{BL} and \cite{LeeActa140}.

\section{Notations}
Let $d>1$ be a positive integer that is not a square. 
We denote by ${\mathbb Z}_{\mathbb K}$ the ring of algebraic integers, 
$d_{\mathbb K}>0$ the discriminant, 
$h_{\mathbb K}$ the class number 
and ${\mathcal Cl}_{\mathbb K}$ the ideal class group 
of the real quadratic number field 
${\mathbb K} 
={\mathbb Q}(\sqrt{d})
=\{x+y\sqrt{d};\ x,y\in {\mathbb Q}\}$. 
Every ideal class in ${\mathcal Cl}_{\mathbb K}$ contains an ideal whose norm is at most the Minkowski bound 
$$M_{\mathbb K}
=\frac{1}{2}\sqrt{d_{\mathbb K}}.$$ 
If ${\mathcal I}$ is a non-zero ideal of ${\mathbb Z}_{\mathbb K}$, 
we let $[{\mathcal I}]\in {\mathcal Cl}_{\mathbb K}$ denote its ideal class. 
If $\ell\geq 2$ is prime, we let ${\mathcal P}_l$ denote either of the prime ideals of ${\mathbb K}$ above $\ell$.
We will use the following lemma:

\begin{lemma}\label{lsnotinert}
Let $d>0$ be square-free. 
If an odd prime $\ell\geq 3$ is non-inert in ${\mathbb K}={\mathbb Q}(\sqrt{d})$, 
i.e., if $\left (\frac{d}{\ell}\right )\neq -1$ (the Legendre symbol), 
then there exists an integer $x\in\{0,1,\cdots,\ell\}$ such that $\ell\mid d-x^2$. 
We may also require that $x$ be even or odd.
\end{lemma}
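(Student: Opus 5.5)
The plan is to turn the non-inertness hypothesis into a square root of $d$ modulo $\ell$, and then use the parity-changing reflection $x\mapsto \ell-x$ to choose the parity of the representative.

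First, the hypothesis $\left(\frac{d}{\ell}\right)\neq -1$ means that $d$ is a square modulo $\ell$. This covers both cases: either $\ell\mid d$, so that $d\equiv 0^2 \pmod{\ell}$, or $d$ is a nonzero quadratic residue. In either case there is an integer $x_0\in\{0,1,\dots,\ell-1\}$ with $x_0^2\equiv d\pmod{\ell}$, that is, $\ell\mid d-x_0^2$. This already gives the first assertion, since $x_0\in\{0,1,\dots,\ell\}$.

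For the parity requirement, I use that $\ell$ is odd. Set $x_1:=\ell-x_0$. Then $x_1^2\equiv x_0^2\equiv d\pmod{\ell}$, so $\ell\mid d-x_1^2$ as well. Since $0\leq x_0\leq \ell-1$, we get $1\leq x_1\leq \ell$, so $x_1$ also lies in $\{0,1,\dots,\ell\}$. Moreover $x_0+x_1=\ell$ is odd, so exactly one of $x_0$ and $x_1$ is even and the other is odd. We therefore obtain a solution of whichever parity is prescribed.

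There is no real obstacle here. The only point needing care is the bookkeeping of the range: the reason the range is $\{0,\dots,\ell\}$ rather than $\{0,\dots,\ell-1\}$ is that when $\ell\mid d$ and an odd $x$ is required, the solution is $x=\ell$. This is exactly the case $x_0=0$, $x_1=\ell$ above. Note that square-freeness of $d$ is not needed for this argument.
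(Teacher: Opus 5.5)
Your proposal is correct and follows the paper's proof: $\left(\frac{d}{\ell}\right)\neq -1$ gives a square root of $d$ modulo $\ell$, you reduce it modulo $\ell$, and you replace $x$ by $\ell-x$ to choose the parity. Your version is slightly more careful about the range than the paper's: you note that $x_0\in\{0,\dots,\ell-1\}$ and $x_1=\ell-x_0\in\{1,\dots,\ell\}$, which is why the upper endpoint $\ell$ is needed.
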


\begin{proof}
Since $\left (\frac{d}{\ell}\right )$ is equal to $0$ or $+1$, 
$d$ is a square modulo $\ell$, 
i.e., there exists $x\in {\mathbb Z}$ such that $\ell$ divides $d-x^2$. 
By reducing $x$ modulo $\ell$, we may assume that $0\leq x\leq\ell$.
Changing $x$ to $\ell-x$ if necessary, we can force $x$ to be odd or to be even. 
\end{proof}

In the proof of \cite[Lemma 5.3]{LouCMJ76} we omitted the proof that we may assume that $1\leq x,y\leq\sqrt{p}$. 
We therefore provide a more complete statement and proof of the lemma here. 

\begin{lemma}\label{existence}
Let $p\equiv\pm 1\pmod 8$ be a prime. 
Take $d=p$ or $d=2p$.
There exist integers $x,y$ such that $d+x^2=2y^2$ and $1\leq x,y\leq\sqrt{d}$. 
\end{lemma}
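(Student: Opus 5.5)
We argue through the arithmetic of ${\mathbb Z}[\sqrt 2]$, where the unit $1+\sqrt 2$ has norm $-1$ and $2$ is totally ramified. Since $p\equiv\pm 1\pmod 8$, we have $\left(\frac{2}{p}\right)=+1$, so $p$ splits in ${\mathbb Q}(\sqrt 2)$, whose ring of integers ${\mathbb Z}[\sqrt 2]$ is a PID. Hence $p=a^2-2b^2$ for some integers $a,b$. Since $-1$ is a norm, we may take the sign in either direction, and we choose $-p=a^2-2b^2$, i.e.\ $p+a^2=2b^2$. For $d=2p$ we write $2p=N(\sqrt 2)N(\alpha)$. Concretely, from $p+a^2=2b^2$ the number $a$ is odd, and we set $x=2b$, $y=a+?$. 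A cleaner route is to multiply $\alpha=a+b\sqrt 2$ by $\sqrt 2$: this gives $\sqrt2\,\alpha=2b+a\sqrt 2$ with norm $4b^2-2a^2=-2N(\alpha)$. So if $N(\alpha)=p$, then $N(\sqrt2\alpha)=-2p$, i.e.\ $2p+(2b)^2=2a^2$. In both cases we therefore get a solution of $d+x^2=2y^2$ with $x,y\neq 0$. Note that $x=0$ is impossible, because it would force $d=2y^2$, and neither $p$ nor $2p$ has that form.

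The real content is the size bound $1\le x,y\le\sqrt d$. For this we use the freedom to multiply by the square of the unit, $\varepsilon^2=3+2\sqrt 2$, which has norm $+1$ and preserves the equation $x^2-2y^2=-d$. Among all solutions $(x,y)$ with $x,y>0$ in a fixed orbit, we pick the one with minimal $y$. The standard descent inequality, as in Nagell's bounds for fundamental solutions of generalized Pell equations, then gives the bound.

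Concretely, write $\beta=x+y\sqrt 2$, with conjugate $\beta'=x-y\sqrt2$ satisfying $\beta\beta'=-d$. Replacing $\beta$ by $\beta\varepsilon^{2k}$ and possibly by $\pm$ and conjugates, we can arrange $1\le\beta/|\beta'|<\varepsilon^2$. Hence
$$\sqrt d\le\beta<\varepsilon\sqrt d,\qquad \sqrt d/\varepsilon<|\beta'|\le\sqrt d.$$
Then $2y\sqrt2=\beta-\beta'=\beta+|\beta'|$, since $\beta'<0$ when the norm is negative. This yields
$$y\le\frac{(\varepsilon+1)\sqrt d}{2\sqrt2}=\frac{(2+\sqrt2)\sqrt d}{2\sqrt 2}\approx 1.207\sqrt d,$$
which is not good enough.

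So we balance more symmetrically. Using the norm $-1$ unit $\varepsilon$ instead (and conjugation, which swaps the sign of $x$), we may arrange $1\le\beta/|\beta'|<\varepsilon$. This requires care, since multiplication by $\varepsilon$ turns a norm $-d$ element into a norm $+d$ one. We therefore instead bound $x$ and $y$ directly by descent. Given a solution with $y>\sqrt d$, consider the map
$$(x,y)\mapsto(|3x-4y|,\,|2x-3y|),$$
which is multiplication by $\varepsilon^{-2}$ up to sign. We check that when $y>\sqrt d$ the new $y'=|3y-2x|$ is strictly smaller than $y$. From $x^2=2y^2-d$ we get $x<\sqrt2\,y$ and $x>y$ (the latter because $y^2>d$). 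Hence $3y-2x<y$, and $3y-2x>3y-2\sqrt2\,y>0$. Thus the descent strictly decreases $y$ while $y>\sqrt d$, and it terminates with $y\le\sqrt d$. Then $x^2=2y^2-d\le y^2$ gives $x\le y\le\sqrt d$, and $x\ge1$ as noted.

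The main step to verify is this descent inequality: that $y>\sqrt d$ forces $y<x\sqrt{}$-comparisons giving $0<3y-2x<y$, and that the new pair is again a positive solution. Everything else is routine.
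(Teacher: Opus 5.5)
Your proof is correct. Its first half matches the paper's: the splitting of $p$ in the principal ideal domain ${\mathbb Z}[\sqrt 2]$, the norm $-1$ unit to fix the sign, and a multiplier of norm $\pm 2$ to pass from $p$ to $2p$. You apply $\sqrt 2$ to an element of norm $+p$, while the paper applies $2+\sqrt 2$ to one of norm $-p$.

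The size bound is obtained differently. The paper places $t=\alpha/\sqrt d$ in the fundamental interval $[\eta^{-1},\eta)$ for multiplication by $\eta^2$. It then reads off $|x|=\frac{\sqrt d}{2}|t-t^{-1}|\le\frac{\sqrt d}{2}(\eta-\eta^{-1})=\sqrt d$ in one step, and gets $y\le\sqrt d$ from $2y^2=d+x^2$. You instead run the integer descent $(x,y)\mapsto(|3x-4y|,3y-2x)$, and your key check is sound. Indeed, $y^2>d$ gives $y<x<\sqrt 2\,y$, hence $0<3y-2x<y$. The norm equation is preserved, and termination yields $x\le y\le\sqrt d$.

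Your descent is purely arithmetic: it needs no real estimates, and it shows directly that every positive solution reduces to a small one. The paper's normalization is non-iterative and shorter.

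Incidentally, your abandoned first attempt was not too weak. By bounding $\beta<\varepsilon\sqrt d$ and $|\beta'|\le\sqrt d$ separately, you threw away the relation $|\beta'|=d/\beta$. If you keep it, $2\sqrt2\,y=\beta+d/\beta$ is increasing on $[\sqrt d,\varepsilon\sqrt d]$. So $2\sqrt2\,y\le\sqrt d(\varepsilon+\varepsilon^{-1})=2\sqrt2\,\sqrt d$, i.e.\ $y\le\sqrt d$. This is essentially the paper's argument, applied to $y$ rather than $x$.

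In a final write-up, drop that detour and the stray fragment ``$y=a+?$''.
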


\begin{proof}
The ring of algebraic integers of the real quadratic number field ${\mathbb Q}(\sqrt{2})$ 
is ${\mathbb Z}[\sqrt{2}]$. 
For $\alpha =x+y\sqrt{2}\in {\mathbb Z}[\sqrt{2}]$, its conjugate is $\alpha'=x-y\sqrt{2}$ 
and its norm is $N(\alpha) =\alpha\alpha'=x^2-2y^2$.
The ring ${\mathbb Z}[\sqrt{2}]$ is a principal ideal domain 
whose fundamental unit $\eta =1+\sqrt{2}$ has norm $-1$. 
Since $p\equiv\pm 1\pmod 8$, the prime $p$ splits in ${\mathbb Z}[\sqrt{2}]$
as a product of two prime principal ideals of norm $2$, say $(p)={\mathcal PP'}$ 
with ${\mathcal P} =(\alpha)$, ${\mathcal P'} =(\alpha')$, 
$p=N({\mathcal P}) =\vert N(\alpha)\vert$ 
and $p=N({\mathcal P'}) =\vert N(\alpha')\vert$. 
By replacing $\alpha$ by $\alpha\eta$ if necessary, 
we may assume that $N(\alpha)=-p$. 
We then define $\beta =(2+\sqrt{2})\alpha$. 
We have $N(\beta)=-2p$.
Therefore, writing $\alpha =x+y\sqrt{2}$ and $\beta =x'+y'\sqrt{2}$, 
we have $p+x^2=2y^2$ and $2p+x'^2=2y'^2$. 

Now, by replacing $\alpha$ by $-\alpha$ if necessary, 
we may assume that $\alpha>0$.
Since ${\mathbb R}_+^* =\cup_{n\in {\mathbb Z}} [\eta^{2n-1},\eta^{2n+1})$, there exists $n\in {\mathbb Z}$ 
such that $\alpha/\sqrt{p}\in [\eta^{2n-1},\eta^{2n+1})$. 
By replacing $\alpha$ by $\alpha\eta^{-2n}$, 
and noticing that $\alpha\eta^{-2n}$ has still norm $-p$,
we may assume that $\alpha/\sqrt{p}\in [\eta^{-1},\eta)$. 
Then $x =(\alpha+\alpha')/2 =(\alpha -p/\alpha)/2$. 
Since $t\mapsto t-p/t$ is increasing on ${\mathbb R}_+^*$, 
we obtain 
$$-\sqrt{p}
=\frac{\sqrt{p}}{2}\left (\frac{1}{\eta}-\eta\right )
\leq x
\leq\frac{\sqrt{p}}{2}\left (\eta-\frac{1}{\eta}\right )
\leq\sqrt{p}$$ 
and $2y^2=p+x^2\leq 2p$. 
Hence, there exist integers $x,y$ such that $p+x^2=2y^2$ and $1\leq x^2,y^2\leq p$. 
By changing the signs of $x$ and $y$ if necessary, we obtain the desired result. 
In the same way, we may assume that $\beta/\sqrt{2p}\in [\eta^{-1},\eta)$. 
Then $x' =(\beta+\beta')/2 =(\beta -2p/\beta)/2$. 
We obtain 
$$-\sqrt{2p}
=\frac{\sqrt{2p}}{2}\left (\frac{1}{\eta}-\eta\right )
\leq x'
\leq\frac{\sqrt{2p}}{2}\left (\eta-\frac{1}{\eta}\right )
\leq\sqrt{2p}$$
and $2y'^2=2p+x'^2\leq 4p$. 
We conclude as before.
\end{proof}

\section{The case where $d$ is prime}\label{Sectiond=p}
\begin{lemma}\label{pl}
Let $p\not\equiv 1\pmod 8$ be an odd prime. 
Suppose that $M_{odd}'(p)\leq 2$. 
Let $\ell\leq M_{\mathbb K}$ be an odd prime that is non-inert in ${\mathbb K}={\mathbb Q}(\sqrt{p})$. 
Then 
$(i)$ $p\not\equiv 7\pmod 8$, 
$(ii)$ $p=x^2+2\ell^2$ for some odd integer $x\in\{1,\cdots,\ell-1\}$ if $p\equiv 3\pmod 8$, 
and $(iii)$ $p=1+4\ell^2$ if $p\equiv 5\pmod 8$.
\end{lemma}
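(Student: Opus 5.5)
The plan is to produce, from the hypothesis $M_{odd}'(p)\leq 2$, an odd integer $x$ with $p-x^2=2^a\ell^b$, and then to pin down $a$ and $b$ by size considerations. First recall that $M_{\mathbb K}=\sqrt p$ if $p\equiv 3\pmod 4$ and $M_{\mathbb K}=\sqrt p/2$ if $p\equiv 1\pmod 4$, so in all cases $\ell\leq\sqrt p$.

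By Lemma \ref{lsnotinert}, applied with the parity requirement "odd", there is an odd $x\in\{1,\dots,\ell\}$ with $\ell\mid p-x^2$. Here $x\neq\ell$, since otherwise $\ell\mid p$ with $\ell<p$. Hence $1\leq x\leq \ell-1<\sqrt p$, and $x$ is admissible in the definition of $M_{odd}'(p)$.

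Next I would compute the $2$-adic valuation of $p-x^2$, using $x^2\equiv 1\pmod 8$:
\begin{itemize}
\item if $p\equiv 3\pmod 8$ then $p-x^2\equiv 2\pmod 8$;
\item if $p\equiv 7\pmod 8$ then $p-x^2\equiv 6\pmod 8$;
\item if $p\equiv 5\pmod 8$ then $p-x^2\equiv 4\pmod 8$.
\end{itemize}
Since $p-x^2$ is divisible by both $2$ and $\ell$ and has $\omega(p-x^2)\leq 2$, we get $p-x^2=2\ell^b$ in the first two cases and $p-x^2=4\ell^b$ in the third, with $b\geq 1$. The whole lemma then reduces to showing three things:
\begin{itemize}
\item for $p\equiv 3\pmod 8$, $b=2$, which gives $p=x^2+2\ell^2$ with $x$ odd and $x\leq \ell-1$, i.e.\ (ii);
\item for $p\equiv 5\pmod 8$, $b=2$ and $x=1$, which gives (iii);
\item for $p\equiv 7\pmod 8$, no admissible $b$ exists, which gives (i).
\end{itemize}

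To control $b$, I would exploit all the odd integers $y\leq\sqrt p$ with $y\equiv \pm x\pmod{2\ell}$, namely $x,\ 2\ell-x,\ 2\ell+x,\dots$. Each such $y$ again satisfies $p-y^2=2\ell^{b_y}$ (respectively $4\ell^{b_y}$), and distinct $y$ give distinct exponents $b_y$. For two such values $y<y'$ one gets
$$(y'-y)(y'+y)=2(\ell^{b_y}-\ell^{b_{y'}})$$
(respectively with the factor $4$). Since $\ell$ divides at most one of $y'-y$ and $y'+y$ (it cannot divide both, as it does not divide $2y$), the smaller power $\ell^{b_{y'}}$ must essentially divide a single factor of size at most about $2\sqrt p$.

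The exponent is then bounded from both sides:
\begin{itemize}
\item \emph{Upper bound.} From $\ell^b\leq p/2$ together with the existence of roughly $\sqrt p/\ell$ such $y$, this should force $b\leq 2$.
\item \emph{Lower bound.} From $2\ell^b=p-x^2>p-\ell^2$: if $b=1$ then $p<\ell^2+2\ell$, and the pair $x,\ 2\ell-x$ (when $2\ell-x\leq\sqrt p$) or the inequality $\ell\leq M_{\mathbb K}$ should rule this out.
\end{itemize}
In the case $p\equiv 1\pmod 4$, the stronger bound $\ell\leq\sqrt p/2$ should similarly force $x=1$. For $p\equiv 7\pmod 8$ I would additionally bring in Lemma \ref{existence}, which provides $p=2y^2-x_0^2$ with $x_0$ odd, $y$ even and $1\leq x_0,y\leq\sqrt p$. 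This factors $p-x_0^2=2(y-x_0)(y+x_0)\cdot\frac{1}{1}$-type expressions, and I expect it to produce an odd value of $x$ for which $p-x^2$ has at least three prime factors, a contradiction.

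The main obstacle is this exponent analysis: turning the divisibility relations between the different $y$'s into a clean proof that $b=2$ (and $x=1$ when $p\equiv 5\pmod 8$). The boundary cases where $2\ell-x$ just exceeds $\sqrt p$ need care. There I would try first the ideal-theoretic viewpoint: the factorization $(x+\sqrt p)={\mathcal P}_2{\mathcal P}_\ell^{\,b}$ combined with the Minkowski bound and the smallness of $x$ relative to $\ell$.
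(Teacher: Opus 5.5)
Your setup is exactly the paper's: an odd $x\le\ell-2$ with $p-x^2=2^a\ell^b$, where $a=1$ for $p\equiv 3,7\pmod 8$ and $a=2$ for $p\equiv 5\pmod 8$. But the part that actually proves the lemma is only asserted (``should force''), and the route you propose for $p\equiv 7\pmod 8$ cannot work. Lemma \ref{existence} combined with $M_{odd}'(p)\le 2$ yields only $p=n^2-2$; this is exactly what Proposition \ref{casedp=7mod8} extracts from it. Moreover, $p=7,23,47,167$ are primes $\equiv 7\pmod 8$ with $M_{odd}'(p)\le 2$. So the factorization $2(y-x_0)(y+x_0)$, which does not involve $\ell$, can never produce the contradiction.

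The missing observation is that $p\equiv 3$ and $p\equiv 7\pmod 8$ need no separate treatment. Both have $a=1$ and $M_{\mathbb K}=\sqrt p$, so the same argument gives $b=2$. Then $p=x^2+2\ell^2\equiv 1+2=3\pmod 8$, because $x$ and $\ell$ are odd. This proves (i) and (ii) at once.

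You also do not need the roughly $\sqrt p/\ell$ values of $y$; the single value $2\ell-x$ suffices. Write $p-(2\ell-x)^2=2^a\ell N$ with $N=\ell^{b-1}-2^{2-a}(\ell-x)$, which is odd.

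\emph{Upper bound.} Suppose $b\ge 3$. Then $p>2\ell^3>4\ell^2$, so $2\ell-x$ is admissible. Also $N>1$ and $N\equiv 2^{2-a}x\not\equiv 0\pmod\ell$. Hence $\omega(p-(2\ell-x)^2)\ge 3$, a contradiction, so $b\le 2$. Your identity with $y=x$, $y'=2\ell-x$ says the same thing: comparing $\ell$-adic valuations forces $b_{y'}=1$, whence (for $a=1$) $\ell^{b-1}=2\ell-2x+1<\ell^2$.

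\emph{Excluding $b=1$ for $p\equiv 3\pmod 4$.} Here $\ell^2\le p=x^2+2\ell\le(\ell-2)^2+2\ell<\ell^2$, a contradiction. This uses $x\le\ell-2$; your inequality $p<\ell^2+2\ell$ alone does not contradict $\ell^2\le p$.

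\emph{The case $p\equiv 5\pmod 8$.} The bound $\ell\le\sqrt p/2$ makes $2\ell-x<\sqrt p$ admissible unconditionally. Then $N=\ell^{b-1}+x-\ell$ is a positive odd integer with no prime factor other than $\ell$. Since $b=1$ would give $N<0$, we have $b\ge 2$, so $N\equiv x\not\equiv 0\pmod\ell$ and therefore $N=1$. This excludes $b\ge 3$, where $N\ge\ell^2-\ell+1>1$. That leaves $b=2$ and $x=N=1$.

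So there are no boundary cases to handle and no need for the ideal-theoretic detour.
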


\begin{proof}
By Lemma \ref{lsnotinert}, 
there exists $x\in\{1,\cdots,\ell\}$ odd such that $\ell$ divides $p-x^2$. 
It follows that $p-x^2=2^a\ell^b$, with $b\geq 1$, $a=1$ if $p\equiv 3, 7\pmod 8$, and $a=2$ if $p\equiv 5\pmod 8$. 
Since $\ell\leq M_{\mathbb K}\leq\sqrt{p}<p$, we have $\ell\neq p$. 
Hence, $\ell\nmid x$, 
i.e., $x\in\{1,\cdots,\ell-2\}$.
Now, $p-(2\ell-x)^2=2^a\ell N$, 
with $N=\ell^{b-1}-2^{2-a}(\ell-x)$ odd. 
Moreover, $N\geq l^{b-1}-2(\ell-2)$ $\geq \ell^2-2(\ell -2) =(\ell-2)^2>1$ 
and $\ell\nmid N$ if $b\geq 3$. 
We conclude that $M_{odd}'(p)\geq\omega(p-(2x-\ell)^2)$ $=\omega(2^a\ell N)\geq 3$ if $b\geq 3$. 
Therefore, $b=1$ or $b=2$. 
For $p\equiv 3, 7\pmod{8}$ we have $b=2$ and $p=x^2+2\ell^2$, 
as $b=1$ would give the following contradiction:
$\ell^2\leq M_{\mathbb K}^2 =p=x^2+2\ell\leq (\ell-2)^2+2\ell<\ell^2$. 
It follows that $p=x^2+2\ell^2\equiv 3\pmod 8$. 
Thus, points $(i)$ and $(ii)$ are proved. 
Finally, suppose that $p\equiv 5\pmod 8$. 
Then $\ell\leq M_{\mathbb K} =\frac{1}{2}\sqrt{p}$ implies $\ell\leq 2\ell -x\leq 2\ell\leq\sqrt{p}$. 
Here again, $N$ is odd, and $\omega (2^a\ell N)=\omega(p-(2\ell -x)^2)\leq M_{odd}'(p)\leq 2$ implies $N=\ell^{b-1}+x-\ell =1$. 
As $b=1$ would imply $x=\ell$, we have $b\geq 2$. 
As $\ell^2+x-\ell\geq\ell^2+2-\ell >1$, we have $b<3$. 
We conclude that $b=2$. Then, $N=1$ gives $x=1$. 
We obtain $p=x^2+4\ell^b =1+4\ell^2$, and point $(iii)$ is proved.
\end{proof}

\subsection{The case where $d$ is a prime congruent to $7\pmod 8$}
\noindent\frame{\vbox{
\begin{proposition}\label{casedp=7mod8}
Let $p\equiv 7\pmod 8$ be a prime. 
Set ${\mathbb K}={\mathbb Q}(\sqrt{p})$, 
a real quadratic number field of discriminant $d_{\mathbb K} =4p$.
If $M_{odd}'(p)\leq 2$, 
then $(i)$ $p=n^2-2$ for some odd $n\geq 3$, 
$(ii)$ the odd primes below the Minkowski bound $M_{\mathbb K} =\sqrt{p}$ remain inert in ${\mathbb K}$, 
$(iii)$ $h_{\mathbb K} =1$ 
and $(iv)$ there are only finitely many such primes $p$. 
Moreover, $M_{odd}'(p)\leq 2$ if and only if $p\in\{7, 23, 47, 167\}$.
\end{proposition}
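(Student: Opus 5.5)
The argument will establish (ii), (i), (iii), (iv) in that order, and then settle the explicit list by a finite check.

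For (ii), apply Lemma \ref{pl}(i) directly. If some odd prime $\ell\le M_{\mathbb K}=\sqrt p$ were non-inert in ${\mathbb K}$, that lemma would force $p\not\equiv 7\pmod 8$, which is a contradiction.

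For (i), use Lemma \ref{existence} with $d=p$. It gives integers $x,y$ with $p+x^2=2y^2$ and $1\le x,y\le\sqrt p$. Since $p$ is odd, $x$ is odd. Then $p-x^2=2(y^2-x^2)=2(y-x)(y+x)$, and here $y>x$ because $2y^2=p+x^2>2x^2$. Any odd prime $\ell$ dividing $y^2-x^2$ divides $p-x^2$, so $p$ is a square modulo $\ell$ and $\ell$ is non-inert. We also have $\ell\le y+x\le 2\sqrt p$ and $\ell\ne p$. I need an odd prime factor below $\sqrt p$, so I would argue through the factors $y-x$ and $y+x$ separately. Every odd prime factor of $y-x$ is $<\sqrt p$, so by (ii) the odd part of $y-x$ is $1$. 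For $y+x$, any odd prime factor $\ell>\sqrt p$ must occur to the first power, since $\ell^2>p\ge\ldots$. A cleaner route may be to note that $y^2-x^2=(p-x^2)/2<p/2$ and that the odd part of $y^2-x^2$ has at most one prime factor exceeding $\sqrt p$. Combining this with $y-x$ being a power of $2$ (up to parity constraints), and with $M'_{odd}(p)\le 2$ allowing only one odd prime overall, should reduce to $y-x$ and $y+x$ having a very restricted shape. Rather than fight this, I would instead consider $x=1$ directly. Since $p\equiv 7\pmod 8$, we have $p-1=2m$ with $m$ odd, so $p-1=2\ell^b$ for a single odd prime $\ell$, and $\ell$ is non-inert. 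The same "$(2\ell-x)$ trick" from Lemma \ref{pl} works here: for an odd prime $\ell\le\sqrt p$ non-inert, contradiction. So it suffices to show that the Lemma \ref{existence} pair yields $p=n^2-2$, i.e.\ $y-x=1$ and the representation is $p=2(x+1)^2-x^2=(x+2)^2-2$ after setting $n=x+2$. Indeed $y^2-x^2$ must be a power of $2$ times at most one prime $>\sqrt p$. Here $y-x$ is odd or even and divides into the $2$-part. If $y+x$ has no odd prime factor then $y^2-x^2$ is a power of $2$, but $y\pm x$ share parity and $p-x^2\equiv 6\pmod 8$ forces $y^2-x^2\equiv 3\pmod 4$, which is odd, so $y^2-x^2$ is odd. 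Therefore $y-x$, being odd with all odd prime factors $<\sqrt p$ and non-inert, equals $1$. This is (i). I expect this parity bookkeeping to be the main obstacle.

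For (iii), every ideal class contains an ideal of norm $\le\sqrt p$. By (ii), such an ideal is composed of primes above $2$ and of inert primes, and inert primes are principal. The prime above $2$ is ramified, with ${\mathcal P}_2^2=(2)$. Using $p=n^2-2$, the element $n+\sqrt p$ has norm $2$, so ${\mathcal P}_2$ is principal and $h_{\mathbb K}=1$.

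For (iv), the unit $\varepsilon=(n^2-1)+n\sqrt p=(n+\sqrt p)^2/2$ has size $\ll p$, so $\log\varepsilon_{\mathbb K}\ll\log p$. The class number formula then gives $h_{\mathbb K}\log\varepsilon_{\mathbb K}=\tfrac12\sqrt{d_{\mathbb K}}L(1,\chi)$, and Siegel's theorem (or Brauer–Siegel) yields $h_{\mathbb K}\to\infty$, contradicting $h=1$ for large $p$. To get the explicit list, I would use an effective alternative instead of Siegel. Take a small odd prime $\ell$, say $\ell=3$ (or $5,7,\dots$), for which $p=n^2-2$ is a square mod $\ell$, i.e.\ $n^2-2\equiv x^2$ has a solution. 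Once $\ell\le\sqrt p$ this contradicts (ii). For instance, $\ell=7$: $2$ is a square mod $7$, hence $p\equiv n^2-2$ is a square whenever $n\equiv 0\pmod 7$, and more generally whenever some quadratic-residue condition holds. Sieving over several small primes should leave only finitely many residue classes, or fail. If sieving fails, I would fall back on the Siegel–Tatuzawa bound together with a computer search for $p<$ the effective limit, checking $M'_{odd}(p)\le 2$ directly, and this search yields $\{7,23,47,167\}$.
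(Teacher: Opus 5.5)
Your arguments for (i)--(iv) are sound and essentially match the paper's.

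\begin{itemize}
\item (ii) follows from Lemma~\ref{pl}(i).
\item (iii) follows from the Minkowski bound $\sqrt p$ together with $N(n+\sqrt p)=2$.
\item (iv) follows from the unit $n^2-1+n\sqrt p$, which is the paper's $p+1+\sqrt{p(p+2)}$, and Brauer--Siegel.
\item For (i), once the false starts are stripped away, your route differs slightly from the paper's. You observe that $y^2-x^2=(p-x^2)/2\equiv 3\pmod 4$, so $y-x$ is odd. Every odd prime factor of $y-x<\sqrt p$ would then be non-inert, contradicting (ii), so $y-x=1$. The paper instead uses $\gcd(b-a,b+a)=1$ and $\omega(2(b-a)(b+a))\le 2$. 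Both work.
\end{itemize}

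The genuine gap is the final ``if and only if $p\in\{7,23,47,167\}$''. Your sieve cannot succeed. For every odd prime $\ell$ there are $(\ell-(\frac{2}{\ell}))/2>0$ residues $n \bmod \ell$ with $n^2-2$ a quadratic non-residue modulo $\ell$. By the Chinese remainder theorem, any finite set of primes therefore leaves infinitely many $n$ unexcluded. The finite quadratic residue covers of Lemma~\ref{step2} exist only because those families are exponential in $(a,b)$, hence periodic; $n^2-2$ is polynomial in $n$.

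Your fallback, Siegel--Tatuzawa plus a computer search below the effective limit, is effective only up to one possible exceptional field. It therefore yields the list only \emph{with at most one possible exception}. That is exactly the weakness the paper retains in the $p\equiv 3\pmod 8$ cases, not the unconditional statement claimed here.

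The missing ingredient is Lee's complete determination \cite{LeeActa140} of the class-number-one real quadratic fields of wide Richaud--Degert type not $\equiv 5\pmod 8$. It gives, for prime $p=n^2-2$, that $h_{\mathbb K}=1$ if and only if $p\in\{7,23,47,167\}$. Combined with your (i) and (iii), this gives the ``only if'' direction. The converse is a direct check that $p-x^2$ has at most two distinct prime factors for odd $x\le\sqrt p$ and these four primes; each value is $2$ times a prime.
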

}}

\begin{proof}
By Lemma \ref{existence}, there exist $a,b\in [0,\sqrt{p})$ such that $p+a^2=2b^2$. 
Clearly, $a$ is odd, $b$ is even and $\gcd(a,b)=1$. 
Since $M_{odd}'(p)\leq 2$ and $0<p-a^2 =2(b-a)(b+a)$ with $\gcd(b-a,b+a)=1$, 
it follows that $b-a=1$ and $p=2b^2-a^2=2(a+1)^2-a^2 =n^2-2$ with $n=a+2$. 
Since $n+\sqrt{p}\in {\mathbb Z}_{\mathbb K}$ has norm $2$, 
the prime ramified ideal of ${\mathbb K}$ above $2$ is principal.
Moreover, the odd primes $\ell\leq M_{\mathbb K} =\sqrt{p}$ are inert in ${\mathbb K}$, 
by Lemma \ref{pl}. 
We conclude that $h_{\mathbb K}=1$. 
Since $p+1+\sqrt{p(p+2)}>1$ is a unit in ${\mathbb Z}_{\mathbb K}$ for $p=n^2-2$, 
the regulator of ${\mathbb K}$ is $O(\log d_{\mathbb K})$. 
Hence, there are only finitely many primes $p$ of the form $p=n^2-2$ for which $h_{\mathbb K}=1$, 
by the Brauer-Siegel theorem (e.g., see \cite{Tatuzawa} or \cite{LouCollMath108}). 
Finally, by \cite{LeeActa140}, if $d=n^2-2$ is prime, then $h_{\mathbb K}=1$ if and only if $p\in\{7, 23, 47, 167\}$.
\end{proof}

\subsection{The case where $d$ is a prime congruent to $3\pmod 8$}
\begin{lemma}\label{uniqueness}
A prime $p\equiv 1\text{ or } 3\pmod{8}$ can be written uniquely in the form $p=m^2+2n^2$ for some $m,n\geq 1$. 
Note that $m$ and $n$ are odd and coprime.
\end{lemma}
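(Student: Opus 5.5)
The plan is to work in ${\mathbb Z}[\sqrt{-2}]$, as Lemma \ref{existence} does in ${\mathbb Z}[\sqrt{2}]$. This ring is the ring of integers of ${\mathbb Q}(\sqrt{-2})$, it is a principal ideal domain (indeed Euclidean for the norm $N(a+b\sqrt{-2})=a^2+2b^2$), and its only units are $\pm 1$.

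\emph{Existence.} Since $p\equiv 1$ or $3\pmod 8$, we have $\left(\frac{-2}{p}\right)=+1$, so $p$ splits as $(p)={\mathcal P}{\mathcal P}'$ with ${\mathcal P}\neq{\mathcal P}'$. Write ${\mathcal P}=(\alpha)$ with $\alpha=m+n\sqrt{-2}$. Then $p=N(\alpha)=m^2+2n^2$, because the norm is positive definite. We may take $m,n\geq 0$ by changing signs, and in fact $m,n\geq 1$: $n=0$ would give $p=m^2$, and $m=0$ would give $p=2n^2$, both impossible for an odd prime.

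\emph{Uniqueness.} Suppose $p=m^2+2n^2=m_1^2+2n_1^2$ with all of $m,n,m_1,n_1\geq 1$. Then $\beta=m_1+n_1\sqrt{-2}$ has norm $p$, so $(\beta)$ is a prime ideal above $p$, i.e.\ $(\beta)={\mathcal P}$ or ${\mathcal P}'$. Hence $\beta=\pm\alpha$ or $\beta=\pm\alpha'$, where $\alpha'=m-n\sqrt{-2}$, since the only units are $\pm1$. In every case $(m_1,n_1)=(\pm m,\pm n)$, and positivity forces $(m_1,n_1)=(m,n)$.

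As a self-contained alternative that avoids ideals, one can use the classical identity
$$(mm_1+2nn_1)(mm_1-2nn_1)=m^2m_1^2-4n^2n_1^2=p(m_1^2-m^2),$$
which follows from $m^2=p-2n^2$ and $m_1^2=p-2n_1^2$. So $p$ divides one of the two factors on the left. One then checks that this forces $mn_1=\pm m_1n$, and hence, using $\gcd(m,n)=1$, the equality of the two pairs. I would present the ideal-theoretic argument and mention this identity only if needed.

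\emph{Parity and coprimality.} Since $p$ is odd, $m$ is odd. Then $m^2\equiv 1\pmod 8$, so $2n^2\equiv p-1\pmod 8$, which is $0$ or $2\pmod 8$. For $p\equiv 3\pmod 8$ this gives $n^2\equiv 1\pmod 4$, so $n$ is odd. For $p\equiv 1\pmod 8$ it gives $n^2\equiv 0\pmod 4$, so $n$ is even. Thus the claim ``$m$ and $n$ are odd'' holds only for $p\equiv 3\pmod 8$; for example $17=3^2+2\cdot 2^2$. I would state the parity claim restricted to $p\equiv 3\pmod 8$, which is the case used in this subsection. Coprimality holds in both cases: any common divisor $g$ of $m$ and $n$ satisfies $g^2\mid p$, so $g=1$.

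The only step with real content is uniqueness, and the key fact there is that $\pm1$ are the only units of ${\mathbb Z}[\sqrt{-2}]$. Apart from that, the proof amounts to checking the parity claim and correcting its scope as above.
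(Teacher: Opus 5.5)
Your main argument is correct and is essentially the paper's: $\mathbb{Z}[\sqrt{-2}]$ is Euclidean with units $\pm1$, so $p$ is a product of two conjugate elements of norm $p$, and uniqueness of factorization pins down $(m,n)$. You are also right that the parity remark only holds for $p\equiv 3\pmod 8$ (for $p\equiv 1\pmod 8$ one gets $n$ even, e.g.\ $17=3^2+2\cdot 2^2$, and Lemma~\ref{step1} itself applies uniqueness with $n=2\ell$, which is harmless because only uniqueness is needed there), but your optional aside contains a slip: the displayed product equals $p(m_1^2-2n^2)=p(m^2+m_1^2-p)$, not $p(m_1^2-m^2)$, though the divisibility by $p$ that you need is unaffected.
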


\begin{proof}
The ring ${\mathbb Z}[\sqrt{-2}]$ is euclidean for the norm 
and $-1$ and $1$ are its only invertible elements.
Since $\left (\frac{-2}{p}\right )=+1$ (the Legendre symbol), 
there exists $x\in {\mathbb Z}$ such that $p$ divides $x^2+2=(x+\sqrt{-2})(x-\sqrt{-2})$. 
By Euclid's lemma, we deduce that $p$ is not irreducible in ${\mathbb Z}[\sqrt{-2}]$. 
It follows that $p=\alpha\beta$ in ${\mathbb Z}[\sqrt{-2}]$, 
where $\alpha$ and $\beta$ are non-units and therefore have norms greater than $1$. 
It follows that $p=N(\alpha)=N(\beta)$. 
Writing $\alpha =m+n\sqrt{-2}$ we obtain $p=m^2+2n^2$. 
Conversely, if $p=m^2+2n^2$, 
then $p=(m+n\sqrt{-2})(m+n\sqrt{-2})$ is a decomposition into a product of irreducible elements in 
${\mathbb Z}[\sqrt{-2}]$. 
Since this decomposition is unique 
up to permutation and multiplication of its factors by invertible elements $-1$ and $+1$, 
we obtain the desired result.
\end{proof}

\noindent\frame{\vbox{
\begin{proposition}\label{casedp=3mod8}
Let $p\equiv 3\pmod 8$ be a prime. 
Set ${\mathbb K}={\mathbb Q}(\sqrt{p})$, 
a real quadratic number field of discriminant $d_{\mathbb K} =4p$.
If $M_{odd}'(p)\leq 2$, then either
$(i)$ $p=m^2+2$, 
the odd primes below the Minkowski bound $M_{\mathbb K} =\sqrt{p}$ 
remain inert in ${\mathbb K}$ and $h_{\mathbb K} =1$, 
or $(ii)$ $p=9m^2\pm 8m+2$ for some odd $m\geq 1$, 
the odd primes $\ell\neq 2m\pm 1$ below the Minkowski bound $M_{\mathbb K} =\sqrt{p}$ 
remain inert in ${\mathbb K}$ and $h_{\mathbb K} =1$. 
There are only finitely many such primes $p$. 
Moreover, with one possible exception, of the form $p=9m^2\pm 8m+2$, 
$M_{odd}'(p)\leq 2$ if and only if $p\in\{3, 11, 19, 59, 83, 107, 227\}$.
\end{proposition}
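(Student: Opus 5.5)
We combine Lemma \ref{uniqueness} with Lemma \ref{pl}. By Lemma \ref{uniqueness}, write $p=m^2+2n^2$ with $m,n\geq 1$ odd and coprime; this representation is unique. First note that $p-m^2=2n^2$, with $m$ odd and $m<\sqrt p$. So $M_{odd}'(p)\leq 2$ forces $\omega(2n^2)\leq 2$, i.e.\ $n=1$ or $n=\ell^k$ for a single odd prime $\ell$. If $n=1$, we are in case $(i)$: $p=m^2+2$. In this case, Lemma \ref{pl}$(ii)$ shows that every odd prime $\ell\leq\sqrt p$ is inert. Indeed, a non-inert such $\ell$ would give a second representation $p=x^2+2\ell^2$ with $\ell\geq 3$, contradicting uniqueness. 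The prime above $2$ is ramified, and it is generated by $m+\sqrt p$, which has norm $m^2-p=-2$. Since every class contains an ideal of norm at most $M_{\mathbb K}=\sqrt p$, we get $h_{\mathbb K}=1$.

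If $n\neq 1$, uniqueness together with Lemma \ref{pl}$(ii)$ shows that there is at most one non-inert odd prime $\ell\leq\sqrt p$, and that then $n=\ell$ and $p=x^2+2\ell^2$ with $x\leq\ell-2$. It remains to exploit $M_{odd}'(p)\leq 2$ further in this case. Use the element $2\ell-x$, which is odd and at most $\sqrt p$. The computation in the proof of Lemma \ref{pl} (with $a=1$, $b=2$) gives
$$p-(2\ell-x)^2=2\ell N,\qquad N=\ell-2(\ell-x)=2x-\ell.$$
So $2x-\ell$ must be $\pm1$ or $\pm$ a power of $\ell$. Since $0<x<\ell$, this forces $2x-\ell=\pm1$. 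Writing $x=m$, so that $\ell=2m\mp1$, we get $p=m^2+2(2m\mp1)^2=9m^2\mp8m+2$, which is case $(ii)$. We must also check that $n=\ell^k$ with $k\geq 2$ cannot occur; Lemma \ref{pl} already yields $b=2$, which I expect handles this (possibly after redoing its argument with $n^2=\ell^{2k}$).

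Next we show $h_{\mathbb K}=1$ in case $(ii)$. The prime above $2$ is principal, as it is generated by an element of norm $\pm2$ such as $(3m\mp\ldots)$. More simply, we look for $u,v$ with $u^2-pv^2=\pm2$; I would try $u=3m\mp 1$-type values. Alternatively, note that $\mathcal P_2\mathcal P_\ell^{\pm1}$ is principal, generated by $x+\sqrt p$ of norm $-2\ell^2$, which gives $[\mathcal P_\ell]^2=[\mathcal P_2]^{-1}$, and also by $(2\ell-x)+\sqrt p$ of norm $-2\ell$. The latter gives $[\mathcal P_2][\mathcal P_\ell]=1$, and together these force $[\mathcal P_\ell]=1$ and hence $[\mathcal P_2]=1$ (I may need to choose the conjugate prime carefully). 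Thus all classes are trivial. For finiteness, produce a unit of size polynomial in $p$ for both families: for $p=m^2+2$, $\varepsilon=(m^2+1)+m\sqrt p$ has norm $1$, and for $p=9m^2\pm8m+2$ I would construct a small unit from the two principal generators above (their quotient, squared if needed). Hence the regulator is $O(\log p)$, and the Brauer--Siegel/Tatuzawa theorem gives finiteness, with one possible exception from Tatuzawa's ineffectivity.

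Finally, for the explicit list, family $(i)$ is settled by known results on $h=1$ for $p=m^2+2$ (as in \cite{LeeActa140}, or the Bir\'o-type results). Family $(ii)$ is handled by an effective Tatuzawa bound plus computation, leaving one possible exception. A direct check of $M_{odd}'(p)\leq2$ on the finitely many candidates then yields $\{3,11,19,59,83,107,227\}$. The main obstacles are the case $n=\ell^k$ with $k\geq2$ and finding an explicit small unit for the family $9m^2\pm8m+2$.
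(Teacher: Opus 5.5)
\textbf{There is a genuine gap in the case $n>1$.} After uniqueness gives $x=m$ and $\ell=n$, your witness $2\ell-x$ is $2n-m$, and
\[
p-(2n-m)^2=2n(2m-n).
\]
So $2n-m\leq\sqrt p$ holds exactly when $n<2m$. Your claim that it is ``at most $\sqrt p$'' is false when $n>2m$. In that range $p-(2\ell-x)^2<0$, so $2\ell-x$ lies outside the range in the definition of $M_{odd}'(p)$, and the hypothesis $M_{odd}'(p)\leq 2$ says nothing about it.

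Your argument therefore does prove $2m-n=1$, i.e.\ $n=2m-1$, when $n<2m$. It gives nothing when $n>2m$: for instance it does not exclude $n=2m+3$. The sign $2x-\ell=-1$, and with it the family $9m^2+8m+2$, is obtained only through this invalid step.

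The missing idea is a second witness for the range $n>2m$. There the odd integer $3m$ satisfies $p-(3m)^2=2(n-2m)(n+2m)>0$. The factors $n-2m$ and $n+2m$ are odd and coprime, since their gcd divides $4m$ and $\gcd(m,n)=1$. Hence $\omega\leq 2$ forces $n-2m=1$. This is exactly how the paper splits the case $n>1$, according to the sign of $2m-n$.

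\textbf{Smaller points.}

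First, the ``obstacle'' $n=\ell^k$ with $k\geq 2$ is not a real one. The paper's argument uses only $\gcd(n,2m-n)=1$ (resp.\ $\gcd(n-2m,n+2m)=1$), never the primality of $n$. Alternatively, every prime $\ell\mid n$ satisfies $\left(\frac{p}{\ell}\right)=\left(\frac{m^2}{\ell}\right)=1$ and $\ell\leq n<\sqrt p$. Lemma \ref{pl}$(ii)$ and uniqueness then give $n=\ell$. You also need this observation to guarantee that the non-inert $\ell$, on which your whole case $n\neq 1$ is built, actually exists.

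Second, for the small unit in case $(ii)$ you need not take quotients of generators. Since $9p-2=(9m\pm 4)^2$, the element $9p-1+3(9m\pm4)\sqrt p$ has norm $1$. Likewise $9m\pm4+3\sqrt p$ has norm $-2$ and generates $\mathcal P_2$.

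Third, your class-group argument via $x+\sqrt p$ and $2\ell-x+\sqrt p$ is fine once the form of $p$ is established. Because $[\mathcal P_2]^2=1$, either choice of conjugate prime above $\ell$ leads to $[\mathcal P_\ell]=1$, and hence $[\mathcal P_2]=1$.
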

}}

\begin{proof}
Assume that $M_{odd}'(p)\leq 2$. 
Recall that $p$ can be written uniquely in the form $p=m^2+2n^2$ for some odd, coprime integers $m,n\geq 1$, 
by Lemma \ref{uniqueness}.

First, suppose that $n=1$. 
Suppose that there is an odd prime $\ell\leq M_{\mathbb K} =\sqrt{p}$ that is non-inert in ${\mathbb K}$. 
Then $p =x^2 +2\ell^2$ for some odd $x\in {\mathbb Z}$, by Lemma \ref{pl}, 
and $p=m^2+2$. 
This contradicts Lemma \ref{uniqueness}. 
Since $m+\sqrt{p}$ has norm $-2$, 
it follows that the prime ramified ideal ${\mathcal P}_2$ of ${\mathbb K}$ above $2$ is principal 
and that $h_{\mathbb K}=1$.

Second, suppose that $n>1$. 
If $2m-n\geq 1$, then $p-(2n-m)^2=2n(2m-n)$, with $\gcd(n,2m-n)=1$. 
It follows that $2m-n=1$ and $p=m^2+2(2m-1)^2=8m^2-8m+2$. 
Otherwise, $n-2m\geq 1$ and $p-(3m)^2 =2(n-2m)(n+2m)$, 
with $\gcd(n-2m,n+2m)=1$. 
It follows that $n-2m=1$ and $p=m^2+2(2m+1)^2=8m^2+8m+2$. 
Since $9m\pm 4+3\sqrt{9m^2\pm 8m+2}$ has norm $-2$, 
it follows that the prime ramified ideal ${\mathcal P}_2$ of ${\mathbb K}$ above $2$ is principal.
Let $\ell\leq M_{\mathbb K} =\sqrt{p}$ be an odd prime that is non-inert in ${\mathbb K}$. 
There exists $x\in {\mathbb Z}$ with $p =x^2 +2\ell^2$, by Lemma \ref{pl}. 
Then $p=m^2+2(2m\pm 1)^2$ and $p=x^2+2\ell^2$ imply $\ell=2m\pm 1$, by Lemma \ref{uniqueness}. 
Since $3m\pm 1+\sqrt{9m^2\pm 8m+2}$ has norm $\mp (2m\pm 1)=\mp\ell$, 
any prime ideal ${\mathcal P}_\ell$ of ${\mathbb K}$ above the prime $\ell$ is principal. 
Hence $h_{\mathbb K}=1$.

Since $p-1+\sqrt{p(p-2)}$ for $p=n^2+2$ 
and $9p-1+3\sqrt{p(9p-2)}$ for $p=9m^2\pm 8 m+2$ 
are units in ${\mathbb Z}_{\mathbb K}$, 
the regulator of ${\mathbb K}$ is $O(\log d_{\mathbb K})$. 
We conclude that there are only finitely many such primes $p$ of the form $p=n^2+2$ or $p=9m^2\pm 8 m+2$ 
for which $h_{\mathbb K}=1$, 
by the Brauer-Siegel theorem (e.g., see \cite{Tatuzawa} or \cite{LouCollMath108}). 
Finally, by \cite{LeeActa140}, if $d=m^2+2$ is an odd square-free integer 
and ${\mathbb K} ={\mathbb Q}(\sqrt{d})$, 
then $h_{\mathbb K}=2$ if and only if $d\in\{3, 11, 83, 227\}$.
\end{proof}

\subsection{The case where $d$ is a prime congruent to $5\pmod 8$}
\noindent\frame{\vbox{
\begin{proposition}\label{casedp=5mod8}
Let $p\equiv 5\pmod 8$ be a prime. 
Set ${\mathbb K}={\mathbb Q}(\sqrt{p})$, 
a real quadratic number field of discriminant $d_{\mathbb K} =p$.
If $M_{odd}'(p)\leq 2$, then 
$(i)$ $p=4n^2+1$ for some odd $n\geq 1$ 
and the odd primes $\ell\neq n$ below the Minkowski bound $M_{\mathbb K} =\frac{1}{2}\sqrt{p}$ 
remain inert in ${\mathbb K}$, 
or 
$(ii)$ $p=n^2+4$ for some odd $n\geq 1$ 
and the odd primes below the Minkowski bound $M_{\mathbb K} =\frac{1}{2}\sqrt{p}$ remain inert in ${\mathbb K}$.
Therefore, $h_{\mathbb K} =1$ 
and there are only finitely many such primes $p$. 
Moreover, $M_{odd}'(p)\leq 2$ if and only if $p\in\{5, 13 ,29, 37, 53, 101, 173, 197, 293, 677\}$.
\end{proposition}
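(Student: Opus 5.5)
Following the pattern of Propositions \ref{casedp=7mod8} and \ref{casedp=3mod8}, the plan is to first pin down the shape of $p$ from a two-squares decomposition. Since $p\equiv 1\pmod 4$, we write $p=a^2+b^2$ with $a$ odd and $b$ even; because $p\equiv 5\pmod 8$, in fact $b=2n$ with $n$ odd. We then have $p-a^2=4n^2$, with $a$ odd and $1\leq a\leq\sqrt p$, so $\omega(4n^2)=1+\omega(n)\leq 2$ gives no restriction beyond $n$ being $1$ or a prime power. To get a real constraint, we evaluate $p-x^2$ at a second odd $x$ built from $a$ and $n$, in the spirit of the $2n-m$ versus $3m$ trick in Proposition \ref{casedp=3mod8}. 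Concretely, we try $x=|2n-a|$ when it is at most $\sqrt p$:
$$p-(2n-a)^2=4n^2+a^2-(2n-a)^2=4na,$$
and $\omega(4na)\leq 2$ with $\gcd(a,n)=1$ forces $a=1$ or $n=1$, i.e. $p=1+4n^2$ or $p=a^2+4$. The remaining range, where $|2n-a|>\sqrt p$, needs its own choice of $x$. Since $a<2n$ is needed for $2n-a$ positive, the inequality $(2n-a)^2\leq p$ fails only in extreme configurations, so one natural choice is $x=2n+a$ reflected, or $x=|a-4n|$, giving $p-(a-4n)^2=4n(2a-3n)$ or similar. The expected outcome is again $a=1$ or $n=1$, possibly after also using $x=a$, $x=2n\pm a$ and $x=a\pm 2n$. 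Making this case split airtight, so that some admissible odd $x$ always produces three distinct prime factors unless $a=1$ or $n=1$, is the step I expect to be the main obstacle. I would organize it by comparing $a$ with $2n$, exactly as in Proposition \ref{casedp=3mod8}.

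Next, for the inertness statements we apply Lemma \ref{pl}(iii). Any odd prime $\ell\leq M_{\mathbb K}$ that is non-inert forces $p=1+4\ell^2$. By uniqueness of the representation of $p$ as a sum of two squares (Fermat), this gives $\ell=n$ in case (i), and it is impossible in case (ii) unless $n=1$, where $p=5$ and there are no such $\ell$.

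For $h_{\mathbb K}=1$, note that $2$ is inert because $p\equiv 5\pmod 8$. Hence only odd primes $\ell\leq\frac12\sqrt p$ matter, and these are all inert except possibly $\ell=n$ in case (i). There the element $1+2n\cdot\frac{1+\sqrt p}{2}-n=\ldots$, or more simply $(1+\sqrt p)/2$ when $p=1+4n^2$, has norm $(1-p)/4=-n^2$. The ideal $((1+\sqrt p)/2)$ is then $\mathcal P_n^2$ (it is not divisible by $n$), so $[\mathcal P_n]$ has order dividing $2$. To get principality I would instead use that $n$ prime and $\mathcal P_n^2$ principal, and compare with the norm $-1$ unit $2n+\sqrt p$. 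If that is insufficient, I would find an element of norm $\pm n$ directly, e.g. via the continued fraction of $\sqrt p$. This sub-step may also be delicate.

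Finally, the units $2n+\sqrt{4n^2+1}$ and $(n+\sqrt{n^2+4})/2$ show that the regulator is $O(\log d_{\mathbb K})$. Brauer--Siegel then gives finiteness (as in \cite{Tatuzawa}, \cite{LouCollMath108}), and the explicit list follows from the known classification of class number one for Richaud--Degert type fields $n^2+4$ and $4n^2+1$ (e.g. \cite{BiroActa106}, \cite{BiroActa107}). The converse is checked numerically for the listed primes.
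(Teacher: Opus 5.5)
Your first step is already complete, and the obstacle you flag there does not exist. Since $a$ and $2n$ are positive with $a^2+4n^2=p$, you have $1\le |2n-a|<\max(a,2n)\le\sqrt p$, and $|2n-a|$ is odd. So $x=|2n-a|$ is always admissible, and there is no ``remaining range''. The identity $p-(2n-a)^2=4an$ with $\gcd(a,n)=1$ then forces $a=1$ or $n=1$ directly, exactly as in the paper. The detour through $|a-4n|$ and further case splits is unnecessary. Your inertness argument via Lemma \ref{pl}(iii) and uniqueness of the two-squares representation is fine. In case (i) you do not even need uniqueness, since $4\ell^2+1=4n^2+1$ forces $\ell=n$.

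The genuine gap is in the proof of $h_{\mathbb K}=1$, which you leave open. From $N((1+\sqrt p)/2)=-n^2$ you only get $[\mathcal P_n]^2=1$. Comparing with the norm $-1$ unit $2n+\sqrt p$ does not, by itself, make $\mathcal P_n$ principal.

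The missing idea is a better element. Since $(2n+1)^2-p=4n$, the algebraic integer $(2n+1+\sqrt p)/2$ has norm $n$, so the primes above $n$ are principal; this is what the paper uses. Alternatively, you can finish your order-$2$ observation by genus theory: $d_{\mathbb K}=p$ is prime, so the narrow class number, and hence $h_{\mathbb K}$, is odd, and a class of order dividing $2$ is trivial.

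With either fix, together with $2$ being inert, the Minkowski bound gives $h_{\mathbb K}=1$. The rest of your argument matches the paper: the regulator bound, Brauer--Siegel, Bir\'o's classifications, and the numerical check of the converse.
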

}}

\begin{proof}
Since $p\equiv 5\pmod 8$, $p$ is a sum of two positive squares, 
i.e., $p=m^2+4n^2$ with $m,n\geq 1$ odd and coprime. 
Thus, $1\leq m,2n\leq\sqrt{p}$ and $\vert m-2n\vert\leq\max(m,2n)\leq\sqrt{p}$. 
Since $M_{odd}'(p)\leq 2$ and $p-(m-2n)^2 =4mn$, 
it follows that $m=1$ or $n=1$, which proves the first assertion. 
Suppose that there is an odd prime $\ell\leq M_{\mathbb K} =\frac{1}{2}\sqrt{p}$ that is non-inert in ${\mathbb K}$. 
Then $p=1+4\ell^2$, by Lemma \ref{pl}, 
and $(2\ell +1+\sqrt{p})/2\in {\mathbb Z}_{\mathbb K}$ has norm $\ell$. 
Therefore, the prime ideals of ${\mathbb K}$ above $\ell$ are principal.
Since the prime $2$ is inert in ${\mathbb K}$, we obtain $h_{\mathbb K}=1$. 
Finally, since $\sqrt{p-1}+\sqrt{p}>1$ for $p=4n^2+1$ 
and $(\sqrt{p-4}+\sqrt{p})/2>1$ for $p=n^2+4$ 
are units in ${\mathbb Z}_{\mathbb K}$, 
the regulator of ${\mathbb K}$ is $O(\log d_{\mathbb K})$. 
Hence, there are only finitely many primes $p$ of the form $p=4n^2+1$ or $p=n^2+4$ for which $h_{\mathbb K}=1$, 
by the Brauer-Siegel theorem (e.g., see \cite{Tatuzawa} or \cite{LouCollMath108}). 
Finally, for $d=4n^2+1$ square-free we have $h_{\mathbb K}=1$ if and only if 
$d\in\{5, 17, 37, 101, 197, 677\}$, by \cite{BiroActa107}, 
and for $d=n^2+4$ square-free we have $h_{\mathbb K}=1$ if and only if 
$d\in\{5, 37, 29, 53, 173, 293\}$, by \cite{BiroActa106}.
\end{proof}

\subsection{The case where $d$ is a prime congruent to $1\pmod 8$}\label{p=1mod8}
\begin{lemma}\label{lsmallisinert}
Let $p\equiv 1\pmod 8$ be a prime. 
Assume that $M_{odd}'(p)\leq 2$. 
Then any odd prime $\ell\leq\sqrt{p/16}$ is inert in ${\mathbb K}={\mathbb Q}(\sqrt{p})$, i.e., 
$\left (\frac{p}{\ell}\right )=-1$ (the Legendre symbol).
\end{lemma}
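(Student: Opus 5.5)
The plan is to argue by contradiction, as in Lemma \ref{pl}, but to exploit the whole arithmetic progression of odd integers congruent to $\pm x$ modulo $2\ell$ that fits below $\sqrt{p}$. The hypothesis $\ell\leq\sqrt{p/16}$, i.e. $4\ell\leq\sqrt{p}$, is exactly what lets at least four such integers be admissible in the definition of $M_{odd}'(p)$.

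Suppose an odd prime $\ell\leq\sqrt{p}/4$ is non-inert in ${\mathbb K}$. By Lemma \ref{lsnotinert} there is an odd $x\in\{1,\dots,\ell\}$ with $\ell\mid p-x^2$. Since $\ell<p$, $\ell\nmid x$, so $x\leq\ell-2$. Consider the odd integers
$$y_0=x,\quad y_1=2\ell-x,\quad y_2=2\ell+x,\quad y_3=4\ell-x .$$
All of them lie in $[1,4\ell]\subseteq[1,\sqrt{p}]$, and each satisfies $y_i\equiv\pm x\pmod{\ell}$. Hence $p-y_i^2>0$ (as $p$ is not a square), $\ell\mid p-y_i^2$, and $8\mid p-y_i^2$ because $p\equiv 1\pmod 8$ and $y_i$ is odd. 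Therefore $M_{odd}'(p)\leq 2$ forces
$$p-y_i^2=2^{a_i}\ell^{b_i},\qquad a_i\geq 3,\ b_i\geq 1 .$$

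Next I would compare $\ell$-adic valuations of consecutive terms. We have $y_1^2-y_0^2=4\ell(\ell-x)$, $y_2^2-y_1^2=8\ell x$ and $y_3^2-y_2^2=12\ell(\ell-x)$. For $\ell\neq 3$ each difference has $\ell$-valuation exactly $1$, since $\ell\nmid x$ and $\ell\nmid \ell-x$; the case $\ell=3$ needs a separate small check. So in each consecutive pair at least one $b_i$ equals $1$. This yields two distinct indices $i\neq j$ with $p-y_i^2=2^{a_i}\ell$ and $p-y_j^2=2^{a_j}\ell$, say $a_i>a_j$. In the same way the $2$-adic valuations of the differences ($2+v_2(\ell-x)$, $3+v_2(x)$, $\dots$) constrain $a_i$ and $a_j$.

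The main obstacle is turning this into an actual contradiction. The crude size argument only gives $2^{a_j}\ell\leq y_j^2-y_i^2\leq 16\ell^2$ together with $2^{a_j}\ell\geq p-16\ell^2$, i.e. $p\leq 32\ell^2$. That is not incompatible with $p\geq 16\ell^2$. I would therefore first try to sharpen it in two ways.

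\begin{itemize}
\item Use the exact $2$-adic valuations. If $a_i>a_j$, then $v_2(y_j^2-y_i^2)=a_j$. This pins $a_j$ down to $2+v_2(\ell-x)$ or $3+v_2(x)$, and so forces $2^{a_j}\ell$ to be a small multiple of $\ell(\ell-x)$ or of $\ell x$. It should then contradict $2^{a_j}\ell\geq p-16\ell^2$ or the exact equality $y_j^2-y_i^2=\ell(2^{a_i}-2^{a_j})$.
\item If that is not enough, extend the progression to all odd $2k\ell\pm x\leq\sqrt{p}$, which gives many more values of the form $2^{a}\ell^{b}$. By the valuation argument, infinitely-often-alternating terms have $b=1$. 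A pigeonhole argument on the powers of $2$ in a window of length about $16\ell^2$ then gives the contradiction.
\end{itemize}

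I expect the bookkeeping of these $2$-adic valuations, together with the small case $\ell=3$, to be where the real work lies.
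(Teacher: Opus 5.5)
Your setup is the same as the paper's. The odd values $y_0=x$, $y_1=2\ell-x$, $y_2=2\ell+x$, $y_3=4\ell-x$ all lie in $[1,\sqrt p]$ and give $p-y_i^2=2^{a_i}\ell^{b_i}$ with $a_i\ge 3$, $b_i\ge 1$. But the proposal stops exactly where the proof has to happen, so there is a genuine gap. You note yourself that the size argument only yields $p\le 32\ell^2$. After that the contradiction is only anticipated (``it should then contradict'', ``a pigeonhole argument then gives the contradiction''), and the case $\ell=3$ is deferred. Moreover, the intermediate conclusion you do reach is not enough: two indices with $b_i=b_j=1$, obtained from consecutive pairs only. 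It allows the two indices to be $\{0,2\}$ or $\{1,3\}$. Their squared differences $4\ell(\ell+x)$ and $4\ell(3\ell-x)$ have $2$-adic valuation $2+v_2$ of an even number, which nothing controls.

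The missing idea is to work with the pairs $\{y_1,y_2\}$ and $\{y_0,y_3\}$. Their squared differences $8\ell x$ and $8\ell(2\ell-x)$ have odd cofactors because $x$ is odd; your ``$3+v_2(x)$'' is simply $3$. If both members of such a pair have $b=1$, then $2^{a_i}-2^{a_j}=\pm 8\cdot(\text{odd})$ forces $\min(a_i,a_j)=3$, i.e. $p=y^2+8\ell$ exactly, and size finishes the argument. If $y\le 3\ell-2$ or $y=x\le\ell-2$, then $p<16\ell^2$. If $y=4\ell-x$, then $p\le(4\ell-1)^2+8\ell=16\ell^2+1$, hence $x=1$ and $p-x^2=16\ell^2$, contradicting $b_0=1$.

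To guarantee that one of these two pairs has $b=1$ on both members, use all six differences, not only consecutive ones. The differences $4\ell(\ell-x)$, $4\ell(\ell+x)$, $8\ell(2\ell-x)$, $8\ell x$, $4\ell(3\ell-x)$ have $\ell$-valuation $1$, and so does $12\ell(\ell-x)$ unless $\ell=3$. Hence for $\ell\ne 3$ at most one $b_i$ is $\ge 2$. For $\ell=3$ (so $x=1$) the only extra configuration is $b_0=b_1=1$, $b_2,b_3\ge 2$; then $2^{a_0}-2^{a_1}=8$ forces $p=49$. No longer progression or pigeonhole is needed.

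The paper proceeds this way, splitting on $b_0$. If $b_0\ge 2$ then $b_1=b_2=1$ and the pair $\{y_1,y_2\}$ concludes; if $b_0=1$ it uses $\{y_0,y_3\}$. In that second case it asserts $\ell^2\nmid p-(4\ell-x)^2$ via a congruence that is valid only when $b_0\ge 2$. For instance, $p=193$, $\ell=3$, $x=1$ gives $p-1=2^6\cdot 3$ but $p-121=2^3\cdot 3^2$. Your pairwise $\ell$-adic bookkeeping is exactly what repairs this: $b_3\ge 2$ with $\ell\ne 3$ forces $b_1=b_2=1$, and the first case applies.
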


\begin{proof}
Suppose, for a contradiction, 
that there is an odd prime $\ell\leq\sqrt{p/16}$ that is non-inert in ${\mathbb K}$. 
Then $\ell\neq p$ and there exists an odd positive integer $x$ with $1\leq x\leq\ell -2$ such that $\ell\mid p-x^2$. 
It follows that $p-x^2=2^a\ell^b$ for some integers $a\geq 3$ and $b\geq 1$. 
Assume that $b\geq 2$. 
Since $0<2\ell-x\leq 2\ell +x\leq 3\ell<\sqrt{p}$ 
and $p-(2\ell\pm x)^2\equiv\mp 4\ell x\not\equiv 0\pmod{\ell^2}$, 
we conclude that
 $p-(2\ell -x)^2=2^c\ell$ and $p-(2\ell +x)^2=2^d\ell$ for some $c,d\geq 3$. 
Moreover, $(2\ell+x)^2-(2\ell -x)^2=4\ell x\not\equiv 0\pmod {16}$. 
It follows that either $c$ or $d$ is equal to $3$. 
Since $0<2\ell-x\leq2\ell+x<3\ell$, 
we obtain the following contradiction: 
$16\ell^2<p\leq 9\ell^2+8\ell <16\ell^2$. 
Therefore, $b=1$ and $p-x^2=2^a\ell$. 
In the same way, $0<4\ell -x<4\ell\leq\sqrt{p}$ and $p-(4\ell-x)^2\equiv 8\ell x\not\equiv 0\pmod{\ell^2}$ 
give $p-(4\ell-x)^2=2^e\ell$ for some $e\geq 3$. 
Since $(4\ell-x)^2-x^2\equiv -8\ell x\not\equiv 0\pmod{16}$, 
$a$ or $e$ is equal to $3$. 
If $a=3$ we obtain the following contradiction: 
$16\ell^2<p=x^2+8\ell\leq (\ell-2)^2+8\ell =\ell^2+4\ell +4<16\ell^2$. 
Finally, $d=3$ would give 
$16\ell^2<p=(4\ell -x)^2+8\ell\leq (4\ell-1)^2+8\ell =16\ell^2+1$. 
This would imply $p=16\ell^2+1 =x^2+2^a\ell$, then $x^2\equiv 1\pmod\ell$, 
hence $x=1$ or $x=\ell -1$. 
Since $x$ is odd we would have $x=1$ and the contradiction $16\ell^2+1=2^a\ell +1$.
\end{proof}

\begin{lemma}\label{step1}
Let $p\equiv 1\pmod 8$ be a prime. 
Assume that $M_{odd}'(p)\leq 2$. 
Then either $p=97$, 
or there exist $a\geq 3$ and $b\geq 0$ such that 
$p=(2^{b+1}+2^{a-2}-1)^2+2^a$, 
or $p=(2^{b+1}-2^{a-2}+1)^2+2^a$, 
or $p=(2^{b+1}-2^{a-2}-1)^2-2^a$. 
\end{lemma}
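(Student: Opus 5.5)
The plan is to follow the pattern of Propositions \ref{casedp=7mod8} and \ref{casedp=3mod8}. I would start from a representation of $p$ given by Lemma \ref{existence}, use $M_{odd}'(p)\leq 2$ on the factorization it produces, and then play it against further factorizations of $p-X^2$ for other odd $X\leq\sqrt p$.

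\textbf{Step 1: a parametrization.} By Lemma \ref{existence} there are integers $1\leq x,y\leq\sqrt p$ with $p+x^2=2y^2$. Since $p\equiv 1\pmod 8$, $x$ is odd. Then $2y^2\equiv 2\pmod 8$, so $y$ is odd as well, and $\gcd(x,y)=1$. Put $u=(y-x)/2$ and $v=(y+x)/2$. These are coprime, of opposite parity, and
$$p-x^2=2(y-x)(y+x)=8uv,\qquad p=u^2+6uv+v^2 .$$
Since $\omega(8uv)\leq 2$, the product $uv$ has at most one odd prime factor. So one of $u,v$ is a power of $2$ and the other is $q^k$ for an odd prime $q$, or equals $1$.

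\textbf{Step 2: a second factorization.} I would use the odd numbers $X=\vert v-3u\vert$ and $X=\vert u-3v\vert$. A direct computation gives
$$p-(v-3u)^2=4u(3v-2u),\qquad p-(u-3v)^2=4v(3u-2v).$$
Whichever of these has $0<X\leq\sqrt p$ is subject to $\omega\leq 2$. This has to be checked from $p=u^2+6uv+v^2$ and the relative sizes of $u$ and $v$, possibly by splitting into the cases $v>u$ and $u>v$.

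\textbf{Step 3: case analysis.} Take, say, $v=2^c$ a power of $2$ and $u=q^k$ odd. The admissible factorization then forces $3v-2u$ (or $3u-2v$) to be a signed power of $2$ times a power of $q$. Because $\gcd(u,3v-2u)$ divides $3$, this cofactor is essentially $\pm 2^{e}$, or $\pm 3\cdot 2^e$ when $q=3$. Lemma \ref{lsmallisinert} then gives $q>\sqrt{p}/4$: the prime $q$ divides $p-x^2$, so it is non-inert. This bound should force $k=1$ and exclude the $q=3$ branch except for finitely many small $p$. Substituting the resulting linear relation between $u$ and a power of $2$ into $p=(u+3v)^2-8v^2$, or into $p=(3u+v)^2-8u^2$, should produce exactly the three shapes $(2^{b+1}\pm 2^{a-2}\pm 1)^2\pm 2^a$. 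The exponents $a,b$ come from the powers of $2$ occurring in $v$ and in the cofactor. The small leftover cases are checked by hand, and I expect $p=97=9^2+2^4$ to be the sporadic survivor.

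\textbf{Main obstacle.} The hardest part will be Steps 2 and 3: guaranteeing that the auxiliary odd $X$ lies in $[1,\sqrt p]$ in each subcase, and organizing the sign and size cases so that each leads to one of the listed forms. If $\vert v-3u\vert$ or $\vert u-3v\vert$ fails the size bound in some subcase, I would try instead $X=\vert v-u\pm 2^{j}\vert$ for a suitable power of $2$, as in the proof of Lemma \ref{lsmallisinert}.
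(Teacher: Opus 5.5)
\textbf{There is a genuine gap.} Your Step~3 only works in part of the case split, and the subcases it misses are the essential ones.

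\textbf{Where Step 3 fails.} Since $x=v-u\geq 1$, the value $X=3v-u$ satisfies $X\leq\sqrt p$ only when $2v<3u$. So in general the only new value available is $X=\vert v-3u\vert$, with $p-(v-3u)^2=4u(3v-2u)$. This gives no linear relation in two subcases:
\begin{itemize}
\item the power of $2$ is $u=2^s$ and $2v>3u$;
\item $u=1$ and $v=2^cq^t$. Your Step~1 overlooks this case, e.g.\ $p=73$ with $(u,v)=(1,6)$.
\end{itemize}
In both, the odd part of $3v-2u$ is prime to $q$, and $\omega\leq 2$ only says that it is a power of \emph{some} odd prime. Most actual solutions live there, e.g.\ $89=2\cdot 7^2-3^2$ with $(u,v)=(2,5)$, and likewise $137,233,257,593,857$.

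By contrast, the subcase you sketch ($v=2^c$, $u=q^k$, $k\geq 1$) contains no family at all. There $3\cdot 2^{c-1}-q^k>2^{c-1}$ is odd and prime to $q$ unless $q=3$, which leaves only $p=97$.

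\textbf{The plan cannot be completed with the values it proposes.} Take the prime $p=1193=2\cdot 31^2-27^2$, with $(u,v)=(2,29)$. It satisfies every constraint your plan generates:
\begin{itemize}
\item $p-27^2=2^4\cdot 29$ and $p-23^2=2^3\cdot 83$;
\item $3v-u=85>\sqrt p$, so that value is not admissible;
\item all your fallback values $\vert 27\pm 2^j\vert\leq\sqrt p$, namely $5,11,19,23,25,29,31$, give $\omega=2$;
\item $3,5,7$ are inert.
\end{itemize}
Yet $1193$ is of none of the three shapes. In fact the only odd $X\leq\sqrt{1193}$ with $\omega(1193-X^2)\geq 3$ is $X=7$, where $1193-49=2^3\cdot 11\cdot 13$. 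Nothing in your plan produces it.

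\textbf{The missing idea.} The paper's proof does not use the representation from Lemma~\ref{existence}; it uses the odd integer nearest to $\sqrt p$.
\begin{itemize}
\item If $(2k-1)^2<p<(2k)^2$, resp.\ $(2k)^2<p<(2k+1)^2$, put $N=p-(2k-1)^2$, resp.\ $N=(2k+1)^2-p$. Then $8\mid N<2\sqrt p$.
\item An odd prime $\ell\mid N$ would be non-inert, hence $\ell>\sqrt p/4$ by Lemma~\ref{lsmallisinert}, and then $N\geq 8\ell>2\sqrt p$. So $N=2^a$ with $a\geq 3$.
\item Now take $X=2^{a-2}+1$, resp.\ $X=2^{a-2}-1$. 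This gives $p-X^2=4AB$ with $A=k-2^{a-3}$ and $B=A+2^{a-2}-1$, resp.\ $B=A+2^{a-2}+1$, of opposite parity.
\item A common odd prime $\ell$ of $A$ and $B$ would exceed $\sqrt p/4$. Then $AB<p/4$ forces $(A,B)=(\ell,2\ell)$, which gives only $p=97$, resp.\ $p=73$.
\item Otherwise $\gcd(A,B)=1$, and $\omega(4AB)\leq 2$ makes $A$ or $B$ a power $2^b$. This yields the three shapes. For $A=2^b$ in the second case, $(2^{b+1}+2^{a-2}+1)^2-2^a=(2^{b+1}+2^{a-2}-1)^2+2^{b+3}$ is the first shape with parameters $(b+3,a-3)$.
\end{itemize}
For $1193=35^2-2^5$ this is exactly $X=7$, $(A,B)=(13,22)$.

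\textbf{The statement itself needs a second exception.} $97$ is not the only sporadic survivor. We have $M_{odd}'(73)=2$, and $73=9^2-2^3$ is of none of the three shapes. So the exceptional set in the statement should be $\{73,97\}$, as the paper's own proof (``either $p=73$'') in fact finds.
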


\begin{proof}
There exists $k\geq 1$ such that $(2k-1)^2<p<(2k+1)^2$. There are two cases.\\
{\bf Case 1.} Suppose first that $(2k-1)^2<p<(2k)^2$. 
Then $p-(2k-1)^2=2^a\ell^b$ for some $a\geq 3$, $b\geq 0$ and $\ell\geq 3$ prime. 
Moreover, $2^a\ell^b =p-(2k-1)^2\leq (2k)^2-1-(2k-1)^2$ $=2(2k-1)$. 
Now, $b\geq 1$ and Lemma \ref{lsmallisinert} would give $\ell>\sqrt{p/16}$ and the following contradiction:
$4p<64\ell^2\leq 2^{2a}\ell^{2b}\leq 4(2k-1)^2<4p$. 
We conclude that $b=0$, and 
$$\hbox{$p =(2k-1)^2+2^a$ for some $a\geq 3$.}$$ 
Then, 
$$p-(2^{a-2}+1)^2 
=4AB,$$ 
where $A=k-2^{a-3}>0$, as $2(2k-1)\geq 2^a\ell^b\geq 2^a$, 
and $B=k+2^{a-3}-1>A>0$. 
Hence, $\omega(4AB)\leq M_{odd}'(p)\leq 2$. 
Notice that $A$ and $B$ have opposite parity.\\ 
First, suppose that $\gcd(A,B)$ is not a power of $2$. 
Then, some odd prime $\ell$ divides $A$ and $B$. 
Then $\ell^2>p/16$, by Lemma \ref{lsmallisinert}. 
Clearly, if
$(i)$ $B>A>0$ are of opposite parity and divisible by some odd prime $\ell\geq\sqrt{p/16}$, 
and $(ii)$ $AB<p/4$, 
then $A=\ell$ and $B=2\ell$.
It follows that $\ell =B-A=2^{a-2}-1$. 
Since $p=(2k-1)^2+2^a$ and $p =(2^{a-2}+1)^2+4AB =(2^{a-2}+1)^2+8\ell^2$, 
we deduce that $a$ is even, by Lemma \ref{uniqueness}. 
Since $\ell =2^{a-2}-1$ is prime, $a=4$, $\ell =3$, and $p=(2^{a-2}+1)^2+8\ell^2 =97$.\\
Second, suppose that $\gcd(A,B)$ is a power of $2$. 
Since $\omega(4AB)\leq 2$, 
we conclude that $A$ or $B$ is a power of $2$. 
If $A=2^b$, we get $k=A+2^{a-3} =2^b+2^{a-3}$ and $$p=(2k-1)^2+2^a =(2^{b+1}+2^{a-2}-1)^2+2^a.$$
If $B=2^b$, we get $k=B-2^{a-3}+1 =2^b-2^{a-3}+1$ and $$p=(2k-1)^2+2^a=(2^{b+1}-2^{a-2}+1)^2+2^a.$$

{\bf Case 2.} Now, assume that $(2k)^2<p<(2k+1)^2$. 
Suppose an odd prime $\ell$ divides $(2k+1)^2-p$. 
Then $\ell\geq\sqrt{p/16}$ by Lemma \ref{lsmallisinert}, 
and $8\ell$ divides $(2k+1)^2-p$. 
We obtain the following contradiction: 
and $2\sqrt{p}>4k=(2k+1)^2-(2k)^2-1\geq (2k+1)^2-p\geq 8\ell$. 
Hence, $(2k+1)^2-p$ is a power of $2$ and 
$$\hbox{$p =(2k+1)^2-2^a$ for some $a\geq 3$.}$$ 
Then,
$$p-(2^{a-2}-1)^2 
=4AB,$$ 
where $A=k-2^{a-3}>0$, 
and $B=k+2^{a-3}+1>A>0$. 
As in Case 1 we deduce that either $p=73$, 
or $\gcd (A,B)$ is a power of $2$. 
Assume that $p\neq 73$.
Using $\omega(4AB)\leq 2$, we deduce that $A$ or $B$ is a power of $2$.
If $A=2^b$, we get $k=A+2^{a-3} =2^b+2^{a-3}$ and 
$$p=(2k+1)^2-2^a 
=(2^{b+1}+2^{a-2}+1)^2-2^a
=(2^{a-2}+2^{b+1}+1)^2+2^{b+3}.$$
\end{proof}

The following lemmas are related to the notion of quadratic residue covers of families of real quadratic fields, 
as developed in \cite[Section 4]{LMWCJM45}.

\begin{lemma}\label{step2}
$(i)$. Let $d\geq 2$ be of the form 
$d=d_+(a,b):=(2^{b+1}+2^{a-2}-1)^2+2^a$ 
or $d=d_-(a,b):=(2^{b+1}-2^{a-2}+1)^2+2^a$ 
for some integers $b\geq 0$ and $a\geq 3$. 
Then $\left (\frac{d}{\ell}\right )\neq -1$ for some prime $\ell\in E=\{3, 5, 7, 13, 17, 109, 241, 433\}$.\\
$(ii)$. Let $d\geq 2$ be of the form 
$d=d(a,b):=(2^{b+1}-2^{a-2}-1)^2-2^a$ 
for some integers $b\geq 0$ and $a\geq 3$. 
Then $\left (\frac{d}{\ell}\right )\neq -1$ for some prime $\ell\in E =\{3, 5, 7, 13, 17, 19, 37, 73, 109, 241\}$. 
\end{lemma}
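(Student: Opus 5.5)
Since $d$ depends on $a$ and $b$ only through the powers $2^a$ and $2^b$, the plan is to reduce everything to a finite check on residue classes of the exponents. For each prime $\ell$ in $E$, the class of $d$ modulo $\ell$ depends only on $a$ modulo $\mathrm{ord}_\ell(2)$ and $b$ modulo $\mathrm{ord}_\ell(2)$. The relevant orders are:
\begin{itemize}
\item[] $\mathrm{ord}_3(2)=2$, $\mathrm{ord}_5(2)=4$, $\mathrm{ord}_7(2)=3$, $\mathrm{ord}_{13}(2)=12$, $\mathrm{ord}_{17}(2)=8$,
\item[] $\mathrm{ord}_{19}(2)=18$, $\mathrm{ord}_{37}(2)=36$, $\mathrm{ord}_{73}(2)=9$, $\mathrm{ord}_{109}(2)=36$, $\mathrm{ord}_{241}(2)=24$, $\mathrm{ord}_{433}(2)=72$ (or a divisor of it).
\end{itemize}
Let $L$ be the least common multiple of the orders of $2$ modulo the primes of $E$; here $L=72$ in both parts. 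Then the pair $(2^a \bmod \ell,\ 2^b \bmod \ell)$, simultaneously for all $\ell\in E$, depends only on $(a \bmod L,\ b \bmod L)$. Statement $(i)$ thus becomes: for each of the $L^2$ pairs $(\alpha,\beta)\in(\mathbb{Z}/L)^2$, and for each of the two forms $d_\pm$, some $\ell\in E$ has $d_\pm(\alpha,\beta)$ a square or zero modulo $\ell$. Statement $(ii)$ is the same claim for $d(a,b)$. The hypotheses $a\geq 3$, $b\geq 0$ play no role mod $\ell$, since $2$ is invertible modulo every odd $\ell$.

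Rather than checking all $72^2$ pairs blindly, I will organize it as a covering sieve. First use the small primes. Modulo $3$, $d_+$ depends on $(a,b)\bmod 2$; I will list the classes where $\bigl(\frac{d}{3}\bigr)\neq-1$ and discard them. Then I refine the surviving classes modulo $4$ (using $\ell=5$), modulo $\mathrm{lcm}(4,3)=12$ (using $7$ and $13$), and modulo $24$ (using $17$). The large primes $109, 241, 433$ (respectively $19,37,73,109,241$ in part $(ii)$) then kill the last few residue classes modulo $36$ or $72$. Each stage is a table of quadratic characters of expressions $(2u+v/4-1)^2+v$ with $u=2^b$, $v=2^a$ running over the subgroup generated by $2$ in $(\mathbb{Z}/\ell)^*$. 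That makes each stage a small finite computation, and I will present it as a table indexed by residue classes.

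The main obstacle is purely bookkeeping: certifying that the sieve actually covers all residue classes modulo $72$. This is presumably why such specific large primes occur — each one is chosen to eliminate exactly the residual classes left over by the smaller primes. My approach is to compute, for each $\ell$, the set $S_\ell\subset(\mathbb{Z}/\mathrm{ord}_\ell(2))^2$ of exponent pairs giving a nonresidue. I then check that the intersection of the lifts of the $S_\ell$ to $(\mathbb{Z}/72)^2$ is empty. I expect to need a computer check, or a careful hand table, for the $433$ and $241$ stages. The orders there should be verified first, and I will replace $72$ by the correct $L$ if my recollection of an order is off.
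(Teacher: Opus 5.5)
Your proposal is correct and is essentially the paper's proof. The paper also notes that $2^{72}\equiv 1\pmod{\ell}$ for every $\ell\in E$, so $d_\pm(a,b)$ and $d(a,b)$ modulo each $\ell$ are $72$-periodic in $a$ and in $b$. It then checks by a Maple computation over one full period ($3\le a\le 74$, $0\le b\le 71$) that some $\ell\in E$ gives $\left(\frac{d}{\ell}\right)\neq -1$.

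The orders you recalled are all correct (in particular $\mathrm{ord}_{433}(2)=72$), so $L=72$ in both parts. Your staged sieve is just a more structured way of organizing the same finite verification.
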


\begin{proof}
Since $2^{72}\equiv 1\pmod\ell$ for $\ell\in E$, 
the function $(a,b)\mapsto d=d_+(a,b)$ is $72$-periodic in $a$ and $b$. 
A straightforward computation using Maple shows that, 
for every pair $(a,b)$ satisfying $3\leq a\leq 74$ and $0\leq b\leq 71$, 
we have $\left (\frac{d_+(a,b)}{\ell}\right )\neq -1$ for some $\ell\in E$. 
The same argument holds for $d_-(a,b)$ and $d(a,b)$.
The desired results follow.
\end{proof}

\noindent\frame{\vbox{
\begin{proposition}\label{casedp=1mod8}
Let $p\equiv 1\pmod 8$ be a prime. 
Set ${\mathbb K}={\mathbb Q}(\sqrt{p})$, 
a real quadratic number field of discriminant $d_{\mathbb K} =p$.
If $M_{odd}'(p)\leq 2$, 
then ${\mathcal Cl}_{\mathbb K}$ is a cyclic group 
generated by the ideal class of either of the two prime ideals of ${\mathbb K}$ above the prime $2$. 
Moreover, we have $M_{odd}'(p)\leq 2$ if and only if 
$p\in\{17, 41, 73, 89, 97, 113, 137, 233, 257, 353, 593, 857,$ $1097, 1217, 1553, 2273, 5297\}$.
\end{proposition}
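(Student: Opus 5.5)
The plan has two halves: first show that ${\mathcal Cl}_{\mathbb K}$ is generated by $[{\mathcal P}_2]$, then pin down the finite list of primes.

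\emph{Generation by $[{\mathcal P}_2]$.} Since $p\equiv 1\pmod 8$, the prime $2$ splits as $(2)={\mathcal P}_2{\mathcal P}_2'$, with $[{\mathcal P}_2']=[{\mathcal P}_2]^{-1}$. Every class contains an integral ideal of norm at most $M_{\mathbb K}=\frac12\sqrt p$. Such an ideal factors into prime ideals of norm $\leq\frac12\sqrt p$, lying above $2$ or above odd primes $\ell\leq\frac12\sqrt p$. So it suffices to show that for each odd prime $\ell\leq\frac12\sqrt p$ which is non-inert, the class $[{\mathcal P}_\ell]$ is a power of $[{\mathcal P}_2]$.

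By Lemma \ref{lsmallisinert}, the odd primes $\ell\leq\frac14\sqrt p$ are inert. So only primes with $\frac14\sqrt p<\ell\leq\frac12\sqrt p$ need treatment. For such $\ell$, Lemma \ref{lsnotinert} gives an odd $x\leq\ell-2$ with $\ell\mid p-x^2$, and $M_{odd}'(p)\leq 2$ forces $p-x^2=2^a\ell^b$ with $a\geq 3$. I would rerun the argument of Lemma \ref{lsmallisinert} with $2\ell\pm x\leq 3\ell<\sqrt p$ to get $b=1$ whenever $\ell$ is not too close to $\sqrt p/3$; the borderline cases are handled by the same size estimates. Then $p-x^2=2^a\ell$, so $(x+\sqrt p)/2$ has norm $-2^{a-2}\ell$. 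Since $x$ is odd, $(x+\sqrt p)/2$ is not divisible by $2$ (nor by $\ell$), so the ideal it generates is ${\mathcal P}_2^{a-2}{\mathcal P}_\ell$ for suitable choices of the primes above $2$ and $\ell$. Hence $[{\mathcal P}_\ell]=[{\mathcal P}_2]^{-(a-2)}$ up to conjugation, which gives cyclicity.

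\emph{The finite list.} Lemma \ref{step1} says that either $p\in\{73,97\}$ or $p$ is one of $d_\pm(a,b)$, $d(a,b)$. Lemma \ref{step2} then supplies a prime $\ell\in E$, with $\ell\leq 433$, that is non-inert in ${\mathbb K}$. Lemma \ref{lsmallisinert} forces $\ell>\sqrt{p/16}$, so $p<16\cdot 433^2$, a bound of about $3\cdot 10^6$. (For family $(ii)$ the largest element of $E$ is $241$, giving an even smaller bound.) This bounds $p$ explicitly. A finite computer check of $M_{odd}'(p)\leq 2$ over primes $p\equiv 1\pmod 8$ below this bound yields the seventeen listed values, and a direct verification confirms that each listed prime satisfies the condition.

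The main obstacle I expect is the class group step for primes $\ell$ in the window $(\sqrt p/4,\sqrt p/2]$. There the auxiliary values $2\ell\pm x$ or $4\ell-x$ may exceed $\sqrt p$, so the exponent $b$ is not immediately forced to be $1$. If $b\geq 2$ survives, I would still use the element $(x+\sqrt p)/2$ of norm $-2^{a-2}\ell^b$, which gives $[{\mathcal P}_\ell]^b\in\langle[{\mathcal P}_2]\rangle$. I would then combine this with a second such relation coming from $p-(2\ell-x)^2$ to obtain $[{\mathcal P}_\ell]$ itself.
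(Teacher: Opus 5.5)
Your treatment of the finite list is exactly the paper's argument: Lemmas \ref{step1} and \ref{step2}, then Lemma \ref{lsmallisinert} to get $p<16\cdot 433^2$, then a computer check in both directions.

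The gap is in the class group step.

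\textbf{Your main route cannot give $b=1$.} You plan to rerun Lemma \ref{lsmallisinert} for $\frac14\sqrt p<\ell\le\frac12\sqrt p$. But the contradiction in that lemma is $16\ell^2<p\le 9\ell^2+8\ell$. Its first inequality is exactly the hypothesis $\ell\le\sqrt{p/16}$, which fails throughout your window. In addition, $2\ell+x\le 3\ell<\sqrt p$ fails once $\ell\geq\sqrt p/3$. So the size estimates rule nothing out; for instance $b=2$, $a=3$, $p=x^2+8\ell^2$ remains possible. Your remark that ``the borderline cases are handled by the same size estimates'' is therefore unfounded.

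\textbf{Your fallback is not carried out.} You propose to combine $[{\mathcal P}_\ell]^b\in\langle[{\mathcal P}_2]\rangle$ with ``a second relation from $p-(2\ell-x)^2$''. This names the right auxiliary value, but you never determine that relation. Combining $[{\mathcal P}_\ell]^b$ with some $[{\mathcal P}_\ell]^{b'}$ only yields $[{\mathcal P}_\ell]$ if you know $\gcd(b,b')=1$.

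\textbf{The missing observation, which is the paper's whole argument.} You need neither the window nor $b=1$.
\begin{itemize}
\item Since $\ell\le M_{\mathbb K}=\frac12\sqrt p$, the odd integer $2\ell-x$ satisfies $1\le 2\ell-x\le 2\ell\le\sqrt p$. Contrary to your remark, it never leaves the admissible range.
\item If $b\ge 2$, then
\[
p-(2\ell-x)^2=4\ell\,\bigl(2^{a-2}\ell^{b-1}+x-\ell\bigr).
\]
The cofactor is even and congruent to $x\not\equiv 0\pmod\ell$. Hence $M_{odd}'(p)\le 2$ forces it to be a power of $2$, so $p-(2\ell-x)^2=2^c\ell$ with $c\ge 3$.
\item Thus, for every non-inert odd prime $\ell\le M_{\mathbb K}$, either $(x+\sqrt p)/2$ (when $b=1$) or $(2\ell-x+\sqrt p)/2$ (when $b\ge 2$) has norm $\pm 2^{e}\ell$, with $\ell$ to the first power. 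Since $[{\mathcal P}_2']=[{\mathcal P}_2]^{-1}$, this gives $[{\mathcal P}_\ell]\in\langle[{\mathcal P}_2]\rangle$ directly.
\end{itemize}
This treats all $\ell\le\frac12\sqrt p$ uniformly, with no appeal to Lemma \ref{lsmallisinert} and no combining of relations.
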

}}

\begin{proof}
Assume that $M_{odd}'(p)\leq 2$.
Let $\ell\leq M_{\mathbb K} =\frac{1}{2}\sqrt{p}$ be an odd prime that is non-inert in ${\mathbb K}$. 
Hence $\ell\neq p$ and $\ell$ splits in ${\mathbb K}$.
By Lemma \ref{lsnotinert}, there exists an odd integer $x\geq 1$, with $1\leq x\leq l$, such that $\ell\mid p-x^2$. 
Since $p-x^2\equiv 0\pmod 8$ we have $p-x^2=2^a\ell^b$ for some $a\geq 3$ and $b\geq 1$. 
Assume that $b\geq 2$. 
Then $\ell\leq 2\ell -x\leq 2\ell\leq\sqrt{p}$ . 
Since $p-(2l-x)^2 =4\ell (2^{a-2}\ell^{b-1}+x-\ell)$, 
it follows that $2^{a-2}\ell^{b-1}+x-\ell$ which is even and not divisible by $\ell$ must be a perfect power of $2$, 
i.e., $p-(2\ell -x)^2 =2^c\ell$ for some $c\geq 3$. 
We conclude that either $p-x^2=2^a\ell$ or $p-(2\ell-x)^2=2^c\ell$, for some $a,c\geq 3$. 
Since $(x+\sqrt{p})/2\in {\mathbb Z}_{\mathbb K}$ and $(2\ell-x+\sqrt{p})/2\in {\mathbb Z}_{\mathbb K}$ 
have norms $2^{a-2}\ell$ and $2^{c-2}\ell$, 
the ideal classes of the prime ideals above $\ell$ belong to the cyclic group 
generated by the ideal class of either of the two prime ideals of ${\mathbb K}$ above the prime $2$. 
The first assertion is proved. 
For the second assertion, if $M_{odd}'(p)\leq 2$ then 
either $p=97$ or there exists an odd prime $\ell\leq 433$ that is non-inert in ${\mathbb K}$, by Lemmas \ref{step1} and \ref{step2}. 
By Lemma \ref{lsmallisinert} we obtain $p\leq 16\cdot 433^2<3\cdot 10^6$. 
An direct computation yields the desired result.
\end{proof}

\section{The case where $d$ is twice a prime}\label{Sectiond=2p}
\subsection{The case where $d=2p$ with $p$ a prime congruent to $1\pmod 4$}
\begin{lemma}\label{dl}
Let $p\geq 3$ be an odd prime. 
Set $d=2p$ and ${\mathbb K} ={\mathbb Q}(\sqrt{d})$.
Suppose that $M_{even}'(d)\leq 2$. 
Let $\ell\leq\sqrt{d}$ be an odd prime that is non-inert in ${\mathbb K}$. 
Then either $d=\ell^2+1$ and $p\equiv 1\pmod 4$, 
or $d=2\ell^2+x^2$ for some even integer $x\in [2,\ell-1]$ and $p\equiv 1\text{ or } 3\pmod8$.
\end{lemma}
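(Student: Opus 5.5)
The plan is to follow the proof of Lemma \ref{pl}, now with even abscissae and with $d-x^2\equiv 2\pmod 4$.

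\textbf{Choosing $x$ and factoring.} By Lemma \ref{lsnotinert}, I choose an even $x\in\{0,\dots,\ell\}$ with $\ell\mid d-x^2$.

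\begin{itemize}
\item Since $\ell\le\sqrt{2p}<p$ for $p\ge 3$, we have $\ell\ne p$ and $\ell\nmid d$. Hence $\ell\nmid x$, so $x\neq 0,\ell$. As $x$ is even, $x\in[2,\ell-1]$.
\item Then $0<x<\ell\le\sqrt d$, so $x$ is admissible in $M_{even}'(d)$, and $d-x^2>0$.
\item Because $d\equiv 2\pmod 4$ and $4\mid x^2$, we get $d-x^2\equiv 2\pmod 4$. Its only even prime factor is $2$, to the first power.
\item With $\omega(d-x^2)\le 2$ and $\ell\mid d-x^2$, this forces $d-x^2=2\ell^b$ with $b\ge 1$.
\end{itemize}

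\textbf{Ruling out $b\ge 3$.} Suppose $b\ge 3$ and use the even integer $2\ell-x\in[\ell+1,2\ell-2]$.

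\begin{itemize}
\item It lies in the admissible range, since $d=x^2+2\ell^b\ge 2\ell^3\ge 4\ell^2\ge(2\ell-x)^2$.
\item A direct computation gives
$$d-(2\ell-x)^2=2\ell^b-4\ell^2+4\ell x=2\ell N,\qquad N=\ell^{b-1}-2\ell+2x.$$
\item $N$ is odd. It is prime to $\ell$, since $N\equiv 2x\not\equiv 0\pmod\ell$ when $b\ge 2$. It satisfies $N\ge \ell^2-2\ell+4>1$.
\item Hence $\omega(d-(2\ell-x)^2)\ge 3$, contradicting $M_{even}'(d)\le 2$. So $b\in\{1,2\}$.
\end{itemize}

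\textbf{The case $b=1$.} Then $d=x^2+2\ell$. From $\ell^2\le d$ and $x\le\ell-1$ we get
$$\ell^2\le(\ell-1)^2+2\ell=\ell^2+1,$$
so $d\in\{\ell^2,\ell^2+1\}$. Now $d=2p$ is not a square, so $d=\ell^2+1$ (and then $x=\ell-1$). Since $\ell^2\equiv 1\pmod 8$, we get $p=(\ell^2+1)/2\equiv 1\pmod 4$.

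\textbf{The case $b=2$.} Then $d=2\ell^2+x^2$ with $x\in[2,\ell-1]$ even. Writing $x=2y$ gives $p=\ell^2+2y^2$. Since $\ell^2\equiv 1\pmod 8$ and $2y^2\equiv 0$ or $2\pmod 8$, we get $p\equiv 1$ or $3\pmod 8$.

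\textbf{Main obstacle.} The only delicate point is the $b\ge 3$ step: showing that $2\ell-x$ stays below $\sqrt d$. Here $\ell$ is only bounded by $\sqrt d$, not by the Minkowski bound $\frac12\sqrt{d_{\mathbb K}}$, so that bound does not give $2\ell\le\sqrt d$. The inequality $d\ge 2\ell^3\ge 4\ell^2$, which uses $b\ge 3$ itself, is what makes the step go through. For $b=2$ no such auxiliary value is needed.
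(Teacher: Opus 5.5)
Your proof is correct and follows the paper's argument step by step. You take an even $x\in[2,\ell-1]$ from Lemma \ref{lsnotinert}, write $d-x^2=2\ell^b$, rule out $b\ge 3$ with the even abscissa $2\ell-x$ using $(2\ell-x)^2\le 4\ell^2\le 2\ell^3\le d$, and then treat $b=1$ and $b=2$. You are slightly more careful than the paper in that you explicitly exclude $d=\ell^2$. One small correction to your closing remark: here $\frac12\sqrt{d_{\mathbb K}}=\sqrt{d}$ (since $d_{\mathbb K}=8p$), so the hypothesis $\ell\le\sqrt d$ is the Minkowski bound, and it simply does not give $2\ell\le\sqrt d$.
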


\begin{proof}
There exists $x$ even in $[0,l]$ such that $\ell\mid 2p-x^2$, 
by Lemma \ref{lsnotinert}. 
We have $\ell\neq p$, as $\ell\leq\sqrt{2p}$. 
Hence $2\leq x\leq\ell -1$. 
Since $M_{even}'(d)\leq 2$, we have $d-x^2=2\ell^a$ for some $a\geq 1$. 
Suppose that $a\geq 3$ and let us derive a contradiction. 
We have $2\ell-x\geq 2\ell-(\ell-1)=\ell-1\geq 2$ 
and $(2\ell-x)^2\leq (2\ell)^2\leq 2\ell^3\leq 2\ell^a=d-x^2\leq d$. 
It follows that $M_{even}'(d)\geq\omega (d-(2\ell-x)^2) =\omega(2\ell(\ell^{a-1}+2x-2l))\geq 3$, 
as $\ell\nmid x$.
Hence, either $a=1$ or $a=2$. 
If $a=1$ then 
$\ell^2\leq d=2\ell +x^2\leq 2\ell+(\ell-1)^2 =\ell^2+1$ 
and $d=\ell^2+1$, 
which implies $p=(\ell^2+1)/2\equiv 1\pmod 4$. 
If $a=2$, then $d=2\ell^2+x^2$ and $p=d/2=\ell^2+2(x/2)^2\equiv 1\text{ or } 3\pmod 8$.
\end{proof}

\noindent\frame{\vbox{
\begin{proposition}\label{cased2p=1mod4}
Let $p\equiv 1\pmod 4$ be a prime. 
Set ${\mathbb K}={\mathbb Q}(\sqrt{2p})$, 
a real quadratic number field of discriminant $d_{\mathbb K} =8p$.
If $M_{even}'(2p)\leq 2$, 
then $(i)$ $p\equiv 5\pmod 8$, 
$(ii)$ $2p=m^2+1$ for some odd $m\geq 3$, 
$(iii)$ the odd primes $\ell\neq m$ below the Minkowski bound $M_{\mathbb K} =\sqrt{2p}$ 
remain inert in ${\mathbb K}$, 
$(iv)$ $h_{\mathbb K}=2$, 
and $(v)$ there are only finitely many such primes $p$. 
Moreover, we have $M_{even}'(2p)\leq 2$ if and only if $2p\in\{10, 26, 122, 362\}$.
\end{proposition}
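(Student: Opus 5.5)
We follow the pattern of Propositions \ref{casedp=5mod8} and \ref{casedp=7mod8}: first exhibit a representation of $2p$ that forces a special shape, then use Lemma \ref{dl} to control the small split primes, and finally bound the regulator so that Brauer--Siegel and known class number tables finish the argument.

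\textbf{Step 1: the shape of $2p$.} Since $p\equiv 1\pmod 4$, write $p=u^2+v^2$ with $u$ odd, $v$ even and $\gcd(u,v)=1$. Then $2p=(u+v)^2+(u-v)^2$, where $m:=|u-v|$ and $m':=u+v$ are odd and coprime. Consider the even number $x=m'-m=2\min(u,v)$, which satisfies $2\le x\le\sqrt{2p}$. One computes
$$2p-x^2=2(m'-m)m \quad\text{or a similar product}.$$
More cleanly, the identity $2p-(u-v+\epsilon)^2$ is awkward, so we instead use $2p-(2v)^2=2(u-v)(u+v)$ when $v<u$, and $2p-(2u)^2=2(v-u)(v+u)$ when $u<v$. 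In either case the two odd factors are coprime. The hypothesis $\omega\le 2$ then forces the smaller factor to be $1$, so $|u-v|=1$ and $2p=1+(u+v)^2=m^2+1$ with $m=u+v$ odd. This proves $(ii)$. For $(i)$, since $m$ is odd we have $m^2\equiv 1\pmod 8$, hence $2p\equiv 2\pmod 8$, and this gives $p\equiv 1\pmod 4$ only. To get $p\equiv 5\pmod 8$, we need to exclude $p\equiv 1\pmod 8$ separately. In that case, Lemma \ref{existence} gives $2p+x^2=2y^2$ with $x$ even and $1\le x,y\le\sqrt{2p}$. Writing $x=2x_1$ yields $p+2x_1^2=y^2$, so $2p-x^2=2(y^2-2x_1^2)-\dots$. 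We would rather use $2p-x^2=2y^2-2x^2=2(y-x)(y+x)$, whose two factors are odd and coprime. Hence $y=x+1$, and then $2p=2(x+1)^2-x^2=(x+2)^2-2$. Combined with $2p=m^2+1$, this gives $m^2+1=n^2-2$, i.e.\ $(n-m)(n+m)=3$, so $n=2$, $m=1$, which is impossible. This is the step I expect to need the most care: choosing the right even $x$ and checking the range and coprimality conditions.

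\textbf{Step 2: small split primes and $h_{\mathbb K}$.} By Lemma \ref{dl}, any odd non-inert prime $\ell\le\sqrt{2p}$ satisfies either $2p=\ell^2+1$, which gives $\ell=m$, or $p=\ell^2+2(x/2)^2$ with $p\equiv 1,3\pmod 8$, which is excluded by $(i)$. This proves $(iii)$. The element $m+\sqrt{2p}$ has norm $-1$, so the fundamental unit has norm $-1$. The ramified prime ideal above $2$ has norm $2$, and it is not principal: $x^2-2py^2=\pm2$ with $y\ne0$ would require $2p\le x^2+2$, and a careful check along the continued fraction of $\sqrt{m^2+1}$ (period $1$) shows that only $\pm1$ occur as small norms. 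The ideals above $m$ are handled by $(m+\sqrt{2p})$ up to the $2$-part: since $N(1+\sqrt{2p})=1-2p=-m^2$, some combination shows $[\mathcal P_m]\in\langle[\mathcal P_2]\rangle$. Genus theory gives $2\mid h_{\mathbb K}$, since $d_{\mathbb K}=8p$ has two prime factors. Every class below the Minkowski bound is generated by $\mathcal P_2$ and $\mathcal P_m$, so $h_{\mathbb K}=2$, which is $(iv)$.

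\textbf{Step 3: finiteness and the explicit list.} The unit $m+\sqrt{2p}$ shows that the regulator is $O(\log d_{\mathbb K})$, so Brauer--Siegel (via \cite{Tatuzawa}) gives $(v)$. For the explicit list, we invoke the known determination of the $d=m^2+1$ with $h=2$ (\cite{BiroActa107}, \cite{LeeActa140}), keep those with $d/2$ prime and $p\equiv5\pmod8$, and then verify $M_{even}'$ directly for $10,26,122,362$.
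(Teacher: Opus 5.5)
\textbf{The gap is in $(iv)$}, at ``some combination shows $[\mathcal P_m]\in\langle[\mathcal P_2]\rangle$''. None of the elements you exhibit gives this. $m+\sqrt{2p}$ is a unit, so it says nothing about $\mathcal P_m$. $N(1+\sqrt{2p})=-m^2$ only yields $(1+\sqrt{2p})=\mathcal P_m^2$, i.e.\ $[\mathcal P_m]^2=1$. Together with $[\mathcal P_2]$ of order $2$, this only shows that the class group is a quotient of $({\mathbb Z}/2{\mathbb Z})^2$, so $h_{\mathbb K}\in\{2,4\}$. Your genus-theory remark, as stated ($2\mid h_{\mathbb K}$), does not decide between the two, and both $(v)$ and the final list depend on $h_{\mathbb K}=2$.

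What is missing is an element of norm $\pm 2m$. We have $N(m-1+\sqrt{2p})=(m-1)^2-(m^2+1)=-2m$, so $(m-1+\sqrt{2p})=\mathcal P_2\mathcal P_m$ for one of the primes $\mathcal P_m$ above $m$, whence $[\mathcal P_m]=[\mathcal P_2]^{-1}=[\mathcal P_2]$. This is the paper's argument. Its text says $\ell+\sqrt{2p}$ has norm $-2\ell$, but that element has norm $-1$; $\ell-1+\sqrt{2p}$ is the one that works. Alternatively, genus theory in its precise form closes the gap. The class group modulo squares has order exactly $2^{t-1}=2$ (here $h_{\mathbb K}=h_{\mathbb K}^+$ because of your unit of norm $-1$). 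A group generated by elements of order at most $2$ with this property is cyclic of order $2$.

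Two further remarks. First, the determination of the even square-free $d=m^2+1$ with $h_{\mathbb K}=2$ is due to Byeon and Lee \cite{BL}. \cite{BiroActa107} treats $h=1$ for $d=4n^2+1$ and \cite{LeeActa140} treats class number one, so neither gives the list you need.

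Second, your exclusion of $p\equiv 1\pmod 8$ is correct and simpler than the paper's. Applying Lemma \ref{existence} with $d=2p$ gives an admissible even $x$ with $2p-x^2=2(y-x)(y+x)$, where $y-x<y+x$ are odd and coprime. Hence $2p=(x+2)^2-2$, which is incompatible with $2p=m^2+1$. The paper instead uses the case $d=p$ and needs the auxiliary value $2y-2x$, the identity $2p-(2y-2x)^2=2x(4y-3x)$ and the bounds $1<x<4y/3$. Your continued-fraction proof that $\mathcal P_2$ is not principal (Lagrange's criterion applies since $2<\sqrt{2p}$) is a valid substitute for the paper's congruence modulo $8$. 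Your proof of $(ii)$ uses the same even value $2\min(u,v)$ as the paper's $m-n$.
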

}}

\begin{proof}
Let $p\equiv 1\pmod 4$ be a prime such that $M_{even}'(2p)\leq 2$.

First, assume that there exists a prime $p\equiv 1\pmod 8$ such that $M_{even}'(2p)\leq 2$. 
We derive a contradiction. 
There exist positive integers $x,y\leq\sqrt{p}$ such that $p+x^2=2y^2$, 
by Lemma \ref{existence}. 
Notice that $x$ and $y$ are odd and coprime. 
We have $x<4y/3$, 
because $x\geq\ 4y/3$ would give $2y^2=p+x^2\geq p+16y^2/9$, 
and the contradiction $y^2>9p/2>p$. 
Moreover, we have $x>1$. 
Indeed, $x=1$ would give $p+1=2y^2$ and $2p=m^2+1$, 
which would imply $(m-2y)(m+2y)=m^2-4y^2=-3$. 
Therefore, we would have $m-2y=-1$ and $m+2y=3$, i.e., $m =y =1$ 
and the following contradiction: $2p =m^2+1=2$. 
Now, $2p-(2y-2x)^2 =4y^2-2x^2-(2y-2x)^2 =2x(4y-3x)$, 
where $x>1$ is odd and $4y-3x>0$ is odd and coprime to $x$. 
Since $M_{even}'(2p)\leq 2$, it follows that $4y-3x=1$. 
Hence $p+1=2y^2-x^2+1=2y'^2$, where $y'=(x+3)/4$. 
It follows that $y'$ is a positive integer 
and, as above, $p+1=2y'^2$ and $2p=m^2+1$ give $p=2$, a contradiction.

Second, $p=a^2+b^2$ is a sum of two squares. 
Therefore, $2p=(a+b)^2+(a-b)^2 =m^2+n^2$ is also a sum of two squares. 
Clearly, $m$ and $n$ are odd and coprime. 
We may assume that $m>n\geq 1$. 
Since $\omega(2p-(m-n)^2)=\omega(2mn)\leq M_{even}'(2p)\leq 2$, 
we conclude that either $m=1$ or $n=1$. 
Since $m>n\geq 1$, we have $n=1$ and $2p=m^2+1.$ 

Third, assume that $p\equiv 5\pmod 8$. 
Let $\ell\leq M_{\mathbb K}=\sqrt{2p}$ be an odd prime that is non-inert in ${\mathbb K}$. 
Then $2p=\ell^2+1$, by Lemma \ref{dl}. 
It follows that $\ell =m$.

Fourth, since $\ell+\sqrt{2p}$ has norm $-2\ell$, 
the product ${\mathcal P}_2{\mathcal P}_l$ is principal. 
It follows that $[{\mathcal P}_l]=[{\mathcal P}_2]^{-1} =[{\mathcal P}_2]$ 
for any odd prime $\ell\leq M_{\mathbb K}$ that is non-inert in ${\mathbb K}$. 
Hence $h_{\mathbb K}=1$ or $2$, 
depending on whether ${\mathcal P}_2$ is principal. 
Finally, since ${\mathcal P}_2$ is not principal, we conclude that $h_{\mathbb K}=2$. 
Indeed, $a^2-2pb^2=\pm 2$ would give $a=2a'$, $2a'^2-pb^2=\pm 1$, 
$b$ would be odd and we would obtain the following contradiction: 
$2a'^2=pb^2\pm 1\equiv 4\text{ or } 6\pmod 8$.

Fifth, since $\sqrt{2p-1}+\sqrt{2p}>1$ 
is a unit in ${\mathbb Z}_{\mathbb K}$ for $p=m^2+1$, 
the regulator of ${\mathbb K}$ is $O(\log d_{\mathbb K})$. 
We conclude that there are only finitely many such primes $p$ for which $h_{\mathbb K}=2$, 
by the Brauer-Siegel theorem (e.g., see \cite{Tatuzawa} or \cite{LouCollMath108}). 

Finally, by \cite{BL}, if $d=m^2+1$ is an even square-free integer and ${\mathbb K} ={\mathbb Q}(\sqrt{d})$, 
then $h_{\mathbb K}=2$ if and only if $d\in\{10, 26, 122, 362\}$.
\end{proof}

\subsection{The case where $d=2p$ with $p$ a prime congruent to $3\pmod 4$}
\noindent\frame{\vbox{
\begin{proposition}\label{cased2p=3mod8}
Let $p\equiv 3\pmod 8$ be a prime. 
Set ${\mathbb K}={\mathbb Q}(\sqrt{2p})$, 
a real quadratic number field of discriminant $d_{\mathbb K} =8p$. 
If $M_{even}'(2p)\leq 2$, then either
$(i)$ $2p=m^2+2$, 
the odd primes below the Minkowski bound $M_{\mathbb K} =\sqrt{2p}$ 
remain inert in ${\mathbb K}$ and $h_{\mathbb K} =1$, 
or $(ii)$ $2p=9m^2\pm 8m+2$ for some even $m\geq 2$ 
the odd primes $\ell\neq 2m\pm 1$ below the Minkowski bound $M_{\mathbb K} =\sqrt{2p}$ 
remain inert in ${\mathbb K}$ and $h_{\mathbb K} =1$. 
There are only finitely many such primes $p$. 
Moreover, with one possible exception, 
of the form $d=9m^2\pm 8m+2$,
$M_{even}'(2p)\leq 2$ if and only if $2p\in\{6, 22, 38, 278\}$.
\end{proposition}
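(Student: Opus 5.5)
The plan is to follow the proof of Proposition \ref{casedp=3mod8}, with $p$ replaced by $2p$ and odd $x$ replaced by even $x$. Since $p\equiv 3\pmod 8$, Lemma \ref{uniqueness} gives a unique representation $p=u^2+2v^2$ with $u,v\ge 1$ odd and coprime. Hence
$$2p=(2v)^2+2u^2=m^2+2n^2,\qquad m=2v \text{ even},\quad n=u \text{ odd},\quad \gcd(m,n)=1,$$
and this is the unique such representation of $2p$. I would split into the cases $n=1$ and $n>1$, as in Proposition \ref{casedp=3mod8}.

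\textbf{Case $n>1$.} Here we produce an even $x\le\sqrt{2p}$ for which $2p-x^2$ factors as $2\cdot(\text{product of two coprime odd numbers})$.
\begin{itemize}
\item If $2m-n\ge 1$, take $x=2n-m$ (even, positive up to sign, of size at most $\sqrt{2p}$). Then $2p-(2n-m)^2=2n(2m-n)$. Since $\gcd(n,2m-n)=1$ and $n>1$, the condition $M_{even}'(2p)\le 2$ forces $2m-n=1$.
\item Otherwise take $x=3m$. Then $2p-(3m)^2=2(n-2m)(n+2m)$, which forces $n-2m=1$.
\end{itemize}
In both cases $2p=m^2+2(2m\pm1)^2=9m^2\pm8m+2$ with $m$ even. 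One must check that $|2n-m|$ and $3m$ are at most $\sqrt{2p}$; this is the same size check as in the $d=p$ case. Then $9m\pm4+3\sqrt{2p}$ has norm $(9m\pm4)^2-9(9m^2\pm 8m+2)=-2$, so $\mathcal P_2$ is principal.

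For an odd prime $\ell\le\sqrt{2p}$ that is non-inert, Lemma \ref{dl} gives either $2p=\ell^2+1$, which is impossible since $2p\equiv 6\pmod 8$ while $\ell^2+1\equiv 2$, or $2p=2\ell^2+x^2$ with $x$ even. The uniqueness from Lemma \ref{uniqueness} then gives $\ell=2m\pm1$. Next, $3m\pm1+\sqrt{2p}$ has norm $(3m\pm1)^2-(9m^2\pm8m+2)=\mp(2m\pm1)$, so $\mathcal P_\ell$ is principal. Since the Minkowski bound is $\sqrt{2p}$, we conclude $h_{\mathbb K}=1$.

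\textbf{Case $n=1$.} Here $2p=m^2+2$. Any non-inert odd $\ell\le\sqrt{2p}$ would, by Lemma \ref{dl}, give a second representation $2p=x^2+2\ell^2$ with $\ell\ge 3$, contradicting uniqueness. The element $m+\sqrt{2p}$ has norm $-2$, so $h_{\mathbb K}=1$.

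\textbf{Finiteness and the explicit list.} We exhibit small units:
\begin{itemize}
\item $2p-1+\sqrt{2p(2p-2)}=(m^2+1)+m\sqrt{2p}$ when $2p=m^2+2$;
\item $9\cdot 2p-1+3\sqrt{2p(18p-2)}$, i.e.\ $(9m\pm4)^2+1+3(9m\pm4)\sqrt{2p}$, in the other family.
\end{itemize}
These give regulator $O(\log d_{\mathbb K})$, and Brauer–Siegel then yields finiteness. For the explicit list, use \cite{LeeActa140} (class number one for $d=m^2+2$ square-free even), together with the Siegel–Tatuzawa bound for the family $9m^2\pm8m+2$. That bound is ineffective for at most one exception, which is the source of the stated possible exception. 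A direct computation below the effective bound, checking $M_{even}'$, then yields $\{6,22,38,278\}$.

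The main obstacle is the family $9m^2\pm 8m+2$. There is no published complete class-number-one classification for it, so I would rely on Tatuzawa's version of Siegel's theorem with the explicit regulator bound, getting an effective bound on $p$ with one exception, and then finish by computer search. A secondary care point is verifying the size constraints $x\le\sqrt{2p}$ and the coprimality claims in the case $n>1$.
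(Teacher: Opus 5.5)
Your proposal is correct and follows the paper's proof essentially step by step. The shared steps are the unique representation $2p=m^2+2n^2$, the split $n=1$ versus $n>1$ with the test values $x=2n-m$ and $x=3m$, the elements of norm $-2$ and $\mp\ell$, the small units plus Brauer--Siegel, and Lee's classification for $m^2+2$ combined with a single Siegel--Tatuzawa exception for $9m^2\pm 8m+2$. Your version is in fact slightly more careful than the paper's, for instance in the computation $m^2+2(2m\pm1)^2=9m^2\pm 8m+2$ and in ruling out $2p=\ell^2+1$ via $2p\equiv 6\pmod 8$.
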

}}

\begin{proof}
Assume that $M_{even}'(2p)\leq 2$. 
Set $d=2p$.
As in Lemma \ref{uniqueness}, 
we have that $d$ can be written uniquely in the form $d=m^2+2n^2$ for some coprime integers $m,n\geq 1$, 
with $m$ even and $n$ odd.

First, suppose that $n=1$. 
Suppose that there is an odd prime $\ell\leq M_{\mathbb K} =\sqrt{d}$ that is non-inert in ${\mathbb K}$. 
Since $d =x^2 +2\ell^2$ for some odd $x\in {\mathbb Z}$, by Lemma \ref{dl}, 
and $d=m^2+2$, 
we obtain a contradiction. 
Since $m+\sqrt{d}$ has norm $-2$, 
it follows that the prime ramified ideal ${\mathcal P}_2$ of ${\mathbb K}$ above $2$ is principal 
and that $h_{\mathbb K}=1$.

Second, suppose that $n>1$. 
If $2m-n\geq 1$, then $d-(2n-m)^2=2n(2m-n)$, with $\gcd(n,2m-n)=1$. 
Hence, $2m-n=1$ and $d=m^2+2(2m-1)^2=8m^2-8m+2$. 
Otherwise, $n-2m\geq 1$ and $d-(3m)^2 =2(n-2m)(n+2m)$, 
with $\gcd(n-2m,n+2m)$ $=1$. 
Hence, $n-2m=1$ and $d=m^2+2(2m+1)^2=8m^2+8m+2$. 
Since $9m\pm 4+3\sqrt{9m^2\pm 8m+2}$ has norm $-2$, 
it follows that the prime ramified ideal ${\mathcal P}_2$ of ${\mathbb K}$ above $2$ is principal.
Let $\ell\leq M_{\mathbb K} =\sqrt{d}$ be an odd prime that is non-inert in ${\mathbb K}$. 
There exists $x\in {\mathbb Z}$ with $d =x^2 +2\ell^2$, by Lemma \ref{dl}. 
Then $p=m^2+2(2m\pm 1)^2$ and $p=x^2+2\ell^2$ imply $\ell=2m\pm 1$. 
Since $3m\pm 1+\sqrt{9m^2\pm 8m+2}$ has norm $\mp (2m\pm 1)=\mp\ell$, 
any prime ideal ${\mathcal P}_\ell$ of ${\mathbb K}$ above the prime $\ell$ is principal. 
We conclude that $h_{\mathbb K}=1$.

Since $d-1+\sqrt{d(d-2)}>1$ for $d=n^2+2$ 
and $9p-1+3\sqrt{p(9p-2)}>1$ for $d=9m^2\pm 8 m+2$ 
are units in ${\mathbb Z}_{\mathbb K}$, 
the regulator of ${\mathbb K}$ is $O(\log d_{\mathbb K})$. 
We conclude that there are only finitely many such primes $p$ of the form $p=n^2+2$ or $p=9m^2\pm 8 m+2$ 
for which $h_{\mathbb K}=1$, 
by the Brauer-Siegel theorem (e.g., see \cite{Tatuzawa} or \cite{LouCollMath108}).

Finally, by \cite{LeeActa140}, if $d=m^2+2$ is an even square-free integer 
and ${\mathbb K} ={\mathbb Q}(\sqrt{d})$, 
then $h_{\mathbb K}=2$ if and only if $d\in\{6, 38\}$.
\end{proof}

\noindent\frame{\vbox{
\begin{proposition}\label{cased2p=7mod8}
Let $p\equiv 7\pmod 8$ be a prime. 
Set ${\mathbb K}={\mathbb Q}(\sqrt{2p})$, 
a real quadratic number field of discriminant $d_{\mathbb K} =8p$. 
If $M_{even}'(2p)\leq 2$, then $2p=m^2-2$ for some $m\geq 4$ 
and the odd primes belown the Minkowski bound $M_{\mathbb K} =\sqrt{2p}$ remain inert in ${\mathbb K}$. 
Therefore, $h_{\mathbb K} =1$ 
and there are only finitely many such primes $p$.
Moreover, $M_{even}'(2p)\leq 2$ if and only if $2p\in\{14, 62, 398\}$.
\end{proposition}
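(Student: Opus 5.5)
The plan is to mirror the proof of Proposition \ref{casedp=7mod8}, with $d=2p$ in place of $p$. Since $p\equiv 7\pmod 8$, Lemma \ref{existence} applied with $d=2p$ gives integers $x,y$ with $1\leq x,y\leq\sqrt{2p}$ and $2p+x^2=2y^2$. Then $x$ is even, say $x=2x_1$. This gives $p+2x_1^2=y^2$, so $y$ is odd, and $\gcd(x,y)=1$ because $p$ is prime and $y<p$. We have
$$2p-x^2=2y^2-2x^2=2(y-x)(y+x),$$
and here $y\pm x$ are odd and coprime. Moreover $0<2p-x^2$, since $x\leq\sqrt{2p}$ and $2p$ is not a square. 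Hence $M_{even}'(2p)\leq 2$ forces $y-x=1$, provided $y-x>0$. This holds because $2y^2=2p+x^2>x^2$, so $y>x/\sqrt2$. We still need $y>x$, which amounts to $2p>x^2$, and that is true. Substituting $y=x+1$ gives
$$2p=2(x+1)^2-x^2=(x+2)^2-2=m^2-2,$$
with $m=x+2\geq 4$ even. (Indeed $x\geq 2$, since $x$ is even and $x\geq 1$.)

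Next, for the splitting of small primes I will use Lemma \ref{dl}. If some odd prime $\ell\leq M_{\mathbb K}=\sqrt{2p}$ were non-inert in ${\mathbb K}$, then either $2p=\ell^2+1$ with $p\equiv 1\pmod 4$, or $2p=2\ell^2+x^2$ with $p\equiv 1,3\pmod 8$. Both conclusions contradict $p\equiv 7\pmod 8$, so every odd prime below the Minkowski bound is inert. For the prime $2$, which ramifies, the element $m+\sqrt{2p}$ has norm $m^2-2p=2$. Hence ${\mathcal P}_2$ is principal. Since every ideal class contains an ideal of norm at most $M_{\mathbb K}$, and all such ideals are now principal, we get $h_{\mathbb K}=1$.

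For finiteness, I will exhibit a small unit, exactly as in the $p=n^2-2$ case. The element $(m+\sqrt{2p})^2/2=(m^2+2p)/2+m\sqrt{2p}=(2p+1)+m\sqrt{2p}$ has norm $(2p+1)^2-2pm^2=(2p+1)^2-2p(2p+2)=1$. So it is a unit, and it is of size $O(p)$. Therefore the regulator is $O(\log d_{\mathbb K})$. The Brauer--Siegel theorem (in the effective forms \cite{Tatuzawa}, \cite{LouCollMath108}) then says $h_{\mathbb K}\geq d_{\mathbb K}^{1/2-\varepsilon}/\log d_{\mathbb K}$ for all but finitely many fields, which is incompatible with $h_{\mathbb K}=1$. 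So only finitely many such $p$ exist.

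The main obstacle is the final explicit list. For this I will invoke the determination in \cite{LeeActa140} of the square-free $d=m^2-2$ with $h_{\mathbb K}=1$. I will keep only those values $d=2p$ with $p\equiv 7\pmod 8$ prime, and then check $M_{even}'(2p)\leq 2$ directly for each candidate, which yields $2p\in\{14,62,398\}$. The converse direction is a direct finite verification for these three values. If Lee's list is stated only for odd $d$ or in another normalization, I would fall back on an effective Tatuzawa-type bound plus computation, with the risk of one possible exceptional value as in Proposition \ref{cased2p=3mod8}.
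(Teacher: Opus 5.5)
Your proposal is correct and follows essentially the same route as the paper: Lemma \ref{existence} for $d=2p$ and the coprime factorization $2p-x^2=2(y-x)(y+x)$ force $2p=m^2-2$; Lemma \ref{dl} rules out non-inert odd primes below $\sqrt{2p}$; the element $m+\sqrt{2p}$ of norm $2$ makes ${\mathcal P}_2$ principal; and the unit $(2p+1)+m\sqrt{2p}=d+1+\sqrt{d(d+2)}$ bounds the regulator so that Brauer--Siegel applies. The paper likewise reads off the final list from Lee's classification \cite{LeeActa140}, which does cover even $d=m^2-2$ (so your fallback with a possible exceptional value is not needed), and your explicit check of $M_{even}'(2p)\leq 2$ for $14$, $62$, $398$ supplies the converse direction that the paper leaves implicit.
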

}}

\begin{proof}
Set $d=2p$.
By Lemma \ref{existence}, there exist $a,b\in [0,\sqrt{d})$ such that $d+a^2=2b^2$. 
Clearly, $a$ is even and $a\geq 2$. 
Since $M_{even}'(d)\leq 2$ and $d-a^2 =2(b-a)(b+a)$, 
it follows that $b-a=1$ and $2p=2b^2-a^2=2(a+1)^2-a^2 =m^2-2$, 
with $m=a+2$. 
Since $m+\sqrt{p}\in {\mathbb Z}_{\mathbb K}$ has norm $2$, 
the prime ramified ideal of ${\mathbb K}$ above $2$ is principal. 
Since the odd primes $\ell\leq M_{\mathbb K} =\sqrt{d}$ are inert in ${\mathbb K}$, 
by Lemma \ref{dl}, 
we deduce that $h_{\mathbb K}=1$. 
Since $d+1+\sqrt{d(d+2)}>1$ is a unit in ${\mathbb Z}_{\mathbb K}$ for $d=m^2-2$, 
the regulator of ${\mathbb K}$ is $O(\log d_{\mathbb K})$. 
Hence, there are only finitely many square-free integers $d$ of the form $d=m^2-2$ for which $h_{\mathbb K}=1$, 
by the Brauer-Siegel theorem (e.g., see \cite{Tatuzawa} or \cite{LouCollMath108}).
Finally, by \cite{LeeActa140}, if $d=m^2-2$ is an even positive square-free integer 
and ${\mathbb K} ={\mathbb Q}(\sqrt{d})$, 
then $h_{\mathbb K}=2$ if and only if $d\in\{14, 62, 398\}$. 
\end{proof}


{\small
\begin{thebibliography}{?????}
\bibitem[Biro1]{BiroActa106}
A. Bir\'o.
\newblock Yokoi's conjecture.
\newblock {\em Acta Arith.} {\bf 106} (2003), 85--104.

\bibitem[Biro2]{BiroActa107}
A. Bir\'o.
\newblock Chowla's conjecture.
\newblock {\em Acta Arith.} {\bf 107} (2003), 179--194.

\bibitem[BL]{BL}
D. Byeon and J. Lee. 
\newblock Class number 2 problem for certain real quadratic fields of Richaud-Degert type. 
\newblock {\em J. Number Theory} {\bf 128} (2008), 865--883. 
Zbl 1167.11036, MR2400045 (22009a:11222).

\bibitem[Gi1]{GicaarXiv}
A. Gica. 
\newblock An additive problem in connection with some quadratic real fields with class number one. 
https://arxiv.org/html/2609.16960v1

\bibitem[Lee]{LeeActa140}
J. Lee.
\newblock The complete determination of wide Richaud-Degert types which are not 5 modulo 8 with class number one. 
\newblock {\em Acta Arith.} {\bf 140} (2009), 1--29. 
Zbl 1569.11162, MR4818274.

\bibitem[LMW]{LMWCJM45}
S. Louboutin, R.A. Mollin and H.C. Williams.
\newblock Class numbers of real quadratic fields, 
continued fractions, 
reduced ideals, 
prime producing quadratic polynomials 
and quadratic residue covers.
\newblock {\em Canad. J. Math.} {\bf 44} (1992), 824--842.

\bibitem[Lou1]{LouCollMath108}
S. Louboutin.
\newblock Simple proofs of the Siegel-Tatuzawa and Brauer-Siegel theorems.
\newblock {\em Colloq. Math.} {\bf 108} (2007), 277--283. 
Zbl 1114.11090, MR2291638 (2008c:11155).

\bibitem[Lou2]{LouCMJ76}
S. Louboutin.
\newblock Ideal class groups of some quadratic number fields 
and factorization of values of some quadratic polynomials.
\newblock {\em Czechoslovak Math. J.} {\bf 76} (2026), 677--697.

\bibitem[MW]{MW}
R. A. Mollin and H. C. Williams. 
\newblock On a Determination of Real Quadratic Fields of Class Number One 
and Related Continued Fraction Period Length Less than 25.
\newblock {\em Proc. Japan Acad.} {\bf 57}, Ser. A (1991), 20--25.

\bibitem[Tat]{Tatuzawa}
T. Tatuzawa. 
\newblock On a theorem of Siegel. 
\newblock {\em Japan J. Math.} {\bf 21} (1951), 163--178. 
Zbl 0054.02302, MR0051262 (14,452c).
\end{thebibliography}
}
\end{document}